\titleformat{\section}{\normalfont\Large\bfseries}{\arabic{section}.}{1em}{}
\titleformat{\subsection}{\normalfont\large\bfseries}{\arabic{section}.\arabic{subsection}.}{1em}{}
\numberwithin{equation}{section}
\newtheorem{theorem}{Theorem}[section]
\newtheorem{lemma}[theorem]{Lemma}
\newtheorem{corollary}[theorem]{Corollary}
\theoremstyle{definition}
\theoremstyle{remark}
\crefname{theorem}{Theorem}{Theorems}
\crefname{lemma}{Lemma}{Lemmas}
\crefname{corollary}{Corollary}{Corollaries}
\crefname{proposition}{Proposition}{Propositions}
\crefname{definition}{Definition}{Definitions}
\crefname{remark}{Remark}{Remarks}
\crefname{equation}{Equation}{Equations}
\title{Your Title}
\author{Your Name}
\date{} 
\title{Essential norm and Schatten class of Hankel operators between weighted Fock spaces}
\author{Yi Liu, Yufeng Lu}
\begin{document}
\maketitle
\begin{abstract}
In this paper, we characterize the essential norm of Hankel operators
from a weighted Fock space $F_{\varphi}^{p}$ to a weighted Lebesgue space $L_{\varphi}^{q}$
for all $1 \leq p, q < \infty$.
Additionally, we characterize the Schatten-$h$ class membership for Hankel operators acting from $F_{\varphi}^{2}$ to $L_{\varphi}^{2}$, where the class is defined by a continuous increasing convex function $h$.
\end{abstract}

\section{Introduction}

Consider the $n$-dimensional complex Euclidean space $\mathbb{C}^{n}$ endowed with the Lebesgue volume measure $v$.
Let $\varphi \in C^2(\mathbb{C}^n)$ be a real-valued weight function. Through the standard identification $\mathbb{C}^n \cong \mathbb{R}^{2n}$ given by
\(
z_j = x_{2j-1} + i x_{2j} \quad (j = 1, \dots, n),
\)
we view $\varphi$ as a $C^2$ function on $\mathbb{R}^{2n}$.
We say $\varphi$ is uniformly strictly convex if there exist constants $0 < m \leq M$
such that the real Hessian
satisfies
\begin{equation}\label{weight_varphi}
m I \leq\operatorname{Hess}_{\mathbb{R}} \varphi(x) = \left( \frac{\partial^2 \varphi(x)}{\partial x_j \partial x_k} \right)_{j,k=1}^{2n}
 \leq M I, \quad \forall x \in \mathbb{R}^{2n},
\end{equation}
where $I$ is the $2n \times 2n$ identity matrix.
 We define the weighted $L^p$-space $L^{p}_{\varphi}$. This space consists of all Lebesgue measurable functions $f$ on $\mathbb{C}^{n}$ with finite norm
\[
\|f\|_{p, \varphi} = \left( \int_{\mathbb{C}^{n}} |f(z) e^{-\varphi(z)}|^p  dv(z) \right)^{1/p}.
\]
The corresponding Fock space $F^{p}_{\varphi}$ is the closed subspace of holomorphic functions:
\[
 F^{p}_{\varphi} = L^{p}_{\varphi} \cap H(\mathbb{C}^{n}),
\]
where $H(\mathbb{C}^{n})$ denotes the space of entire functions on $\mathbb{C}^{n}$. When equipped with the norm $\|\cdot\|_{p, \varphi}$, $ F^{p}_{\varphi}$ forms a Banach space for $p \geq 1$. These spaces generalize the classical Fock spaces obtained when $\varphi(z) = (\alpha/2)|z|^2$ for $\alpha > 0$.

The Bergman projection $P: L^{2}_{\varphi} \to F^{2}_{\varphi}$ admits the integral representation
\[
P f(z) = \int_{\mathbb{C}^{n}} f(w) K(z, w) e^{-2 \varphi(w)} dv(w),
\]
which extends to a bounded operator from $L^{p}_{\varphi}$ to $ F^{p}_{\varphi}$ for all $1 \leq p \leq \infty$. This projection satisfies the reproducing property $P f = f$ for $f \in  F^{p}_{\varphi}$.

To define Hankel operators with unbounded symbols, we introduce a key dense subset of $F^{p}_{\varphi}$  for $1 \leq p < \infty$. It is well-established \cite{IDA} that the set of finite linear combinations of reproducing kernels, denoted by
\[
\Gamma = \left\{ \sum_{j=1}^{N} a_j K(\cdot, z_j) :
N \in \mathbb{N},\ a_j \in \mathbb{C},\ z_j \in \mathbb{C}^n \text{ for } 1 \leq j \leq N \right\}.
\]
We define the symbol class
\[
\mathcal{S} = \left\{ f \text{ measurable on } \mathbb{C}^n : f g \in L^{1}_{\varphi} \text{ for all } g \in \Gamma \right\}.
\]
For $f \in \mathcal{S}$, we consider the Hankel operator $H_f$ densely defined on $F^{p}_{\varphi}$ by
\[
H_f g(z) = (I - P)(fg)(z) = f(z)g(z) - P(fg)(z),
\]
where $P$ is the Bergman projection.

The following are the properties of the Bergman kernel $K(\cdot, \cdot)$, which are referred  in \cite{IDA}.

(1) There exist positive constants $\theta$ and $C_{1}$, depending only on $n, m$ and $M$, such that for all $z, w \in \mathbb{C}^{n}$,
\[
|K(z, w)| \leq C_{1} e^{\varphi(z)+\varphi(w)} e^{-\theta|z-w|}.
\]

(2) There exist positive constants $C_{2}$ and $r_{0}$ such that for all $z \in \mathbb{C}^{n}$ and $w \in B(z, r_{0})$,
\begin{equation}\label{eq:K_lower_estimate}
|K(z, w)| \geq C_{2} e^{\varphi(z)+\varphi(w)}.
\end{equation}

(3) For all $0<p<\infty$, the following norm equivalences hold for $z \in \mathbb{C}^{n}$:
\[
\frac{1}{C} e^{\varphi(z)} \leq \|K(\cdot, z)\|_{p, \varphi} \leq C e^{\varphi(z)}, \quad
\frac{1}{C} \leq \|k_{z}\|_{p, \varphi} \leq C,
\]
where $k_{z}(\cdot) = K(\cdot, z)/\sqrt{K(z, z)}$ is the normalized kernel. Additionally,
$
\lim_{|z| \to \infty} k_{z}(\xi) = 0
$
uniformly in $\xi$ on compact subsets of $\mathbb{C}^{n}$.

For a given radius \( r > 0 \), a sequence \( \{w_{j}\}_{j\in\mathbb{N}} \) in \( \mathbb{C}^{n} \) is said to form an \( r \)-lattice if it satisfies the following two conditions:
\begin{equation*}
 \quad \bigcup_{j} B(w_{j}, r) = \mathbb{C}^{n} \quad \text{and} \quad  \quad B\left(w_{j}, \tfrac{r}{2\sqrt{n}}\right) \cap B\left(w_{k}, \tfrac{r}{2\sqrt{n}}\right) = \emptyset \quad \text{for all } j \neq k.
\end{equation*}
We can easily see that there exists an integer $N$, depending only on the dimension of $\mathbb{C}^{n}$, such that for any $r$-lattice $\{a_{k}\}_{k=1}^{\infty}$, the following inequality holds:
\begin{equation*}
1 \leq \sum_{k=1}^{\infty} \chi_{B(a_{k}, 2r)}(z) \leq N \quad \text{for all} \  z \in \mathbb{C}^{n},
\end{equation*}
where $\chi_{E}$ denotes the characteristic function of a set $E \subset \mathbb{C}^{n}$.
Let \( w_{1} \in \mathbb{C}^{n} \) be a fixed base point and \( r > 0 \). We define the fundamental lattice:
\begin{equation} \label{eq:lambda-def}
\Lambda := \left\{w_{1} + \tfrac{r}{\sqrt{n}}(m + i s) : m, s \in \mathbb{Z}^{n}\right\}.
\end{equation}
This construction naturally yields an \( r \)-lattice structure on \( \mathbb{C}^{n} \). For any positive integer \( K \), we can identify the finite subset:
\begin{equation*}
\left\{w_{1} + (z_{1}, \ldots, z_{n}) \in \Lambda : 0 \leq \operatorname{Re} z_{j}, \operatorname{Im} z_{j} < K \tfrac{r}{\sqrt{n}} \text{ for } j=1,\ldots,n\right\} = \{w_{1}, \ldots, w_{K^{2n}}\}.
\end{equation*}
This allows us to define, for each \( 1 \leq k \leq K^{2n} \), the  sublattice:
\begin{equation}\label{eq:lambda_k-def}
\Lambda_{k} := \left\{w_{k} + K \tfrac{r}{\sqrt{n}}(m + i s) : m, s \in \mathbb{Z}^{n}\right\}.
\end{equation}
These sublattices exhibit the following key properties:
\begin{equation*}
 \quad \Lambda = \bigcup_{k=1}^{K^{2n}} \Lambda_{k} \quad \text{and} \quad  \quad \Lambda_{j} \cap \Lambda_{k} = \emptyset \text{ whenever } j \neq k.
\end{equation*}

For a function $f \in L_{\text{loc}}^{q}(\mathbb{C}^{n})$ (the space of all locally square integrable functions on $\mathbb{C}^{n}$), we define
\[
G_{q,r}(f)(z) = \inf_{h \in H(B(z, r))} \left( \frac{1}{|B(z, r)|} \int_{B(z, r)} |f - h|^{q} \, dv \right)^{\frac{1}{q}} \quad (z \in \mathbb{C}^{n})
\]
where$H(B(z, r))$ denotes the set of all holomorphic functions on
$
B(z, r) = \{ w \in \mathbb{C}^{n} : |z - w| < r \}
$
and $|B(z, r)| = \int_{B(z, r)} dv$ .

For $0 < s \leq \infty$, the space $\mathrm{IDA}^{s,q}$ (Integral Distance to Analytic Functions) consists of all $f \in L_{\mathrm{loc}}^{q}$ such that
\[
\|f\|_{\mathrm{IDA}^{s,q}} = \left\|G_{q,r}(f)\right\|_{L^{s}} < \infty
\]
for some $r > 0$. We write $\mathrm{BDA}^{q}$ for $\mathrm{IDA}^{\infty,q}$. The space $\mathrm{VDA}^{q}$ consists of all $f \in L_{\mathrm{loc}}^{q}$ such that $\lim\limits_{z \rightarrow \infty} G_{q,r}(f)(z) = 0$ for some $r > 0$.

For any  $z \in \mathbb{C}^{n}$ and function $f \in L^{q}(B(z, r), dv)$, where $r > 0$, the $q$-th mean oscillation of $f$ on the ball $B(z,r)$ is given by

\[
M_{q, r}(f)(z) = \left( \frac{1}{|B(z, r)|} \int_{B(z, r)} |f(w)|^{q} dv(w) \right)^{1/q}.
\]

Research on Hankel operators in classical Fock spaces traces back to early characterizations of compactness and boundedness. Bauer~\cite{Bauer2005} characterized Hankel operators with real-valued symbols on the classical Fock space. This foundational result was subsequently extended to generalized Fock spaces $F_{\alpha}^{p}$ by Per\"al\"a, Schuster and Virtanen~\cite{PSV2014}, and Hu and Wang \cite{Wang2018Hankelbetween} further generalized these results to mappings from $F_{\alpha}^{p}$ to $F_{\alpha}^{q}$.

For weighted Fock spaces $F_{\varphi}^{p}$ where $\varphi$ is subharmonic, real-valued, and satisfies $\operatorname{Hess}_{\mathbb{R}} \varphi \simeq \mathrm{E}$, Hu and Virtanen~\cite{Schatten,IDA} obtained comprehensive results. Their work established precise criteria for the boundedness and compactness of $H_f: F_{\varphi}^{p} \to L_{\varphi}^{q}$ ($1 \leq p, q < \infty$) and fully characterized the Schatten-$p$ class membership of $H_f: F_{\varphi}^{2} \to L_{\varphi}^{2}$ for $0 < p < \infty$.

Recent developments include the work of Lv and Wang~\cite{LvWang2024} on doubling Fock spaces, and Asghari, Hu, and Virtanen~\cite{AsghariHV2024} on Schatten class Hankel operators and the Berger-Coburn phenomenon. We observe that when  n=1 , the weight satisfying condition \eqref{weight_varphi} in our paper becomes a doubling measure, which was previously studied  in \cite{AsghariHV2024,LvWang2024}.

For Banach spaces \( X \) and \( Y \), the essential norm of a bounded linear operator \( T: X \rightarrow Y \) is defined by
\[
\|T\|_{\mathrm{ess}} = \inf \left\{ \|T-K\|_{X \rightarrow Y} : K \text{ compact from } X \text{ to } Y \right\},
\]
where \(\|\cdot\|_{X \rightarrow Y}\) stands for the operator norm from \(X\) to \(Y\). Lin and Rochberg \cite{Lin1993} gave an equivalent description of the essential norm for Hankel operators on Bergman spaces, which was later extended to weighted Bergman spaces by Asserda \cite{Asserda2006}. We establish analogous results for weighted Fock spaces to those obtained in \cite{Asserda2006,Lin1993}.

\begin{theorem}\label{TH:Hf_ess_Gq}
For $1 \leq p \leq q < \infty$, let \( f \in \mathcal{S} \) and $ H_{f}$ is bounded from $F_{\varphi}^{p}$ to $L_{\varphi}^{q}$. Then for any $r > 0$, the following quantities are equivalent:
\begin{enumerate}
    \item[(1)] \( \left\|H_{f}\right\|_{\text{ess}} \);

    \item[(2)] \( \displaystyle\limsup_{|z| \rightarrow \infty}\left\|H_{f}\left(k_{z}\right)\right\|_{L_{\varphi}^{q}} \);

    \item[(3)] \( \displaystyle\limsup _{|z| \rightarrow \infty}G_{q, r}(f)(z)  \);

    \item[(4)] \(\inf _{f=f_{1}+f_{2}}\left(\limsup _{|z| \rightarrow \infty}\left|\bar{\partial} f_{1}(z)\right|+\limsup _{|z| \rightarrow \infty} M_{q, r}\left(f_{2}\right)(z)\right)\), where the decomposition $f = f_1 + f_2$ is as provided by \autoref{thm:BDA-decomp}.
\end{enumerate}
\end{theorem}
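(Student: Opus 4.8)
\emph{Proof idea.} The plan is to establish the equivalences by running the cyclic chain $(1)\gtrsim(2)\gtrsim(3)\gtrsim(4)\gtrsim(1)$, with all implicit constants depending only on $n,m,M,p,q$ and $r$; note that boundedness of $H_f$ in the range $1\le p\le q$ already forces $G_{q,r}(f)\in L^\infty$, so that \autoref{thm:BDA-decomp} applies and quantity (4) is meaningful. For $(1)\gtrsim(2)$ I would use that $\|k_z\|_{p,\varphi}\simeq 1$ while $k_z(\xi)\to 0$ uniformly for $\xi$ in compact sets, hence $k_z\to 0$ weakly (weak-$\ast$ when $p=1$) as $|z|\to\infty$, so $\|Kk_z\|_{q,\varphi}\to 0$ for every compact $K\colon F_\varphi^p\to L_\varphi^q$; then $\|H_f-K\|\ge\limsup_{|z|\to\infty}\|H_fk_z\|_{q,\varphi}$, and taking the infimum over $K$ gives (1)$\gtrsim$(2). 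For $(2)\gtrsim(3)$ I would fix $0<\rho\le r_0$, so that $K(z,\cdot)$ has no zero on $B(z,\rho)$ by \eqref{eq:K_lower_estimate}, and write $P(fk_z)=k_z h_z$ with $h_z:=P(fk_z)/k_z$ holomorphic on $B(z,\rho)$; since $|k_z e^{-\varphi}|\simeq 1$ there, $\|H_fk_z\|_{q,\varphi}^q\gtrsim\int_{B(z,\rho)}|f-h_z|^q\,dv\ge|B(z,\rho)|\,G_{q,\rho}(f)(z)^q$, so $\|H_fk_z\|_{q,\varphi}\gtrsim G_{q,\rho}(f)(z)$; the standard comparability of $\limsup_{|z|\to\infty}G_{q,t}(f)(z)$ across radii $t>0$ then yields (2)$\gtrsim$(3) for every $r$.

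For $(3)\gtrsim(4)$ I would plug $f$ into \autoref{thm:BDA-decomp}: since $f_1$ is assembled from best holomorphic approximants on balls of a fixed radius via a partition of unity, $|\bar\partial f_1(z)|$ and $M_{q,r}(f_2)(z)$ are each dominated by $\sup_{|w-z|\le Cr}G_{q,\rho}(f)(w)$, and letting $|z|\to\infty$ gives $\limsup_{|z|\to\infty}|\bar\partial f_1(z)|+\limsup_{|z|\to\infty}M_{q,r}(f_2)(z)\lesssim\limsup_{|z|\to\infty}G_{q,r}(f)(z)$, which dominates the infimum in (4). For $(4)\gtrsim(1)$, fix any decomposition $f=f_1+f_2$ of the type furnished by \autoref{thm:BDA-decomp} and any $R>0$, and split $f_i=f_i\chi_{B(0,R)}+f_i\chi_{B(0,R)^c}$. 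The Hankel operators with the compactly supported symbols $f_i\chi_{B(0,R)}$ are compact --- a norm-bounded sequence in $F_\varphi^p$ has a subsequence converging uniformly on compact sets (normal families plus the pointwise growth bound), and multiplication by a compactly supported, locally $L^q$ symbol turns this into $L_\varphi^q$-convergence --- so $\|H_f\|_{\mathrm{ess}}\le\|H_{f_1\chi_{B(0,R)^c}}\|+\|H_{f_2\chi_{B(0,R)^c}}\|$. I would bound the second norm by $\|I-P\|\cdot\sup_{|z|>R-r}M_{q,r}(f_2)(z)$ using the standard local-mean estimate for Fock functions (valid for $p\le q$), and the first norm by $\sup_{|z|>R-r}|\bar\partial f_1(z)|$, realizing $H_{f_1}g$ as (comparable to) the minimal weighted-$L^q$ solution of $\bar\partial u=(\bar\partial f_1)g$ and invoking the H\"ormander-type weighted $\bar\partial$-estimate available for weights satisfying \eqref{weight_varphi} throughout the range $p\le q$. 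Letting $R\to\infty$ and taking the infimum over decompositions then closes the cycle.

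I expect the principal obstacle to be the implication $(4)\Rightarrow(1)$, the upper bound for the essential norm. The delicate ingredients there are: (i) the weighted $\bar\partial$-solution estimate needed to treat $H_{f_1}$ when $p<q$ strictly, which requires an $L^p\!\to\!L^q$-type solution operator adapted to $\varphi$ and the verification that the canonical Hankel solution is controlled by it; and (ii) the bookkeeping of the localization, namely that Hankel operators with compactly supported, locally $L^q$ symbols are genuinely compact from $F_\varphi^p$ to $L_\varphi^q$ and that the tail operator norms are captured, in the limit $R\to\infty$, by the $\limsup$ quantities in (4). The remaining implications $(1)\gtrsim(2)$, $(2)\gtrsim(3)$ and $(3)\gtrsim(4)$ are comparatively routine once the kernel estimates (1)--(3) and \autoref{thm:BDA-decomp} are in hand.
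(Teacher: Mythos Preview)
Your cyclic scheme $(1)\gtrsim(2)\gtrsim(3)\gtrsim(4)\gtrsim(1)$ is exactly the paper's, and your arguments for the first three implications match it essentially line by line: weak convergence of $k_z$ for $(1)\Rightarrow(2)$, the kernel lower bound \eqref{eq:K_lower_estimate} for $(2)\Rightarrow(3)$, and the pointwise estimates $|\bar\partial f_1|,\,M_{q,r}(f_2)\lesssim G_{q,r}(f)$ coming from the decomposition for $(3)\Rightarrow(4)$. For $(4)\Rightarrow(1)$ the paper likewise splits $H_f=H_{f_1}+H_{f_2}$ and treats the two pieces by truncation (Lemmas~\ref{le:f2_to_Hfess} and~\ref{le:f1_to_Hfess}); your treatment of $H_{f_2}$ via Fock--Carleson bounds is the same as the paper's.

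There is, however, a real technical slip in your localization of $H_{f_1}$. You truncate the \emph{symbol}, writing $f_1=f_1\chi_{B(0,R)}+f_1\chi_{B(0,R)^c}$, and then claim $\|H_{f_1\chi_{B(0,R)^c}}\|\lesssim\sup_{|z|>R-r}|\bar\partial f_1(z)|$ via a $\bar\partial$-solution operator. But $f_1\chi_{B(0,R)^c}$ is not $C^1$, and $\bar\partial\bigl(f_1\chi_{B(0,R)^c}\bigr)\neq\chi_{B(0,R)^c}\,\bar\partial f_1$: there is an extra distributional term carried by $\partial B(0,R)$, and since $f_1$ itself may be large (e.g.\ $f_1$ holomorphic, so $\bar\partial f_1\equiv 0$ while $|f_1|$ is unbounded) you cannot control it by $|\bar\partial f_1|$. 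Replacing $\chi$ by a smooth cutoff $\sigma_R$ does not help, because the extra term $f_1\,\bar\partial\sigma_R$ is governed by $|f_1|$ on an annulus, not by $|\bar\partial f_1|$.

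The paper avoids this by truncating at the level of the \emph{operator}: first write $H_{f_1}g=A_\varphi(g\,\bar\partial f_1)-P\bigl(A_\varphi(g\,\bar\partial f_1)\bigr)$ via \autoref{lem:Hf_A}, and then set
\[
T_{1,t}g=A_\varphi(\chi_t\,g\,\bar\partial f_1)-P(\cdots),\qquad T_{2,t}g=A_\varphi\bigl((1-\chi_t)\,g\,\bar\partial f_1\bigr)-P(\cdots).
\]
Now $T_{1,t}$ is compact (the measure $|\chi_t\bar\partial f_1|^q\,dv$ is compactly supported, hence vanishing $(p,q)$-Fock--Carleson), and $\|T_{2,t}\|\lesssim\sup_{|z|\ge t}|\bar\partial f_1(z)|$ follows directly from the $L^p\to L^q$ boundedness of $A_\varphi$ (\autoref{lem:A_properties}) and of $P$. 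Letting $t\to\infty$ gives the desired bound. This is precisely the H\"ormander-type mechanism you invoke, but applied to the localized \emph{form} $(1-\chi_t)g\,\bar\partial f_1$ rather than to a truncated symbol. With this adjustment your argument coincides with the paper's.
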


\begin{theorem}\label{TH:Hf_ess_H}
For $1 \leq p \leq q < \infty$, suppose that $f \in \mathcal{S}$ and $H_{f}$ is bounded from $F_{\varphi}^{p}$ to $L_{\varphi}^{q}$.
Then we have
\[
\|H_{f}\|_{ess} \simeq \inf \left\{\|H_{f}-H\|_{F_{\varphi}^{p} \rightarrow L_{\varphi}^{q}}: H \text{ compact Hankel operator}\right\} \simeq \inf _{h \in \mathrm{VDA}^{q }}\|f-h\|_{\mathrm{BDA}^{q}}.
\]
\end{theorem}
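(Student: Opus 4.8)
The plan is to compare all the quantities to the one already controlled in \autoref{TH:Hf_ess_Gq}, namely $G:=\limsup_{|z|\to\infty}G_{q,r}(f)(z)$, for which \autoref{TH:Hf_ess_Gq} gives $\|H_f\|_{\mathrm{ess}}\simeq G$. Write $E:=\|H_f\|_{\mathrm{ess}}$, $E_H:=\inf\{\|H_f-H\|_{F_\varphi^p\to L_\varphi^q}:H\text{ compact Hankel}\}$ and $D:=\mathrm{dist}_{\mathrm{BDA}^q}(f,\mathrm{VDA}^q)=\inf_{h\in\mathrm{VDA}^q}\|f-h\|_{\mathrm{BDA}^q}$. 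Two inequalities are immediate. First, $E\le E_H$, since every compact Hankel operator is a compact operator. Second, $G\le D$: by the sublinearity $G_{q,r}(f)(z)\le G_{q,r}(f-h)(z)+G_{q,r}(h)(z)\le\|f-h\|_{\mathrm{BDA}^q}+G_{q,r}(h)(z)$ (triangle inequality in $L^q(B(z,r))$), letting $|z|\to\infty$ for a fixed $h\in\mathrm{VDA}^q$ gives $G\le\|f-h\|_{\mathrm{BDA}^q}$, and then one takes the infimum over $h$. So it remains to prove (i) $D\lesssim G$ and (ii) $E_H\lesssim D$; together with \autoref{TH:Hf_ess_Gq} this forces $E\simeq E_H\simeq D\simeq G$.

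Step (i) is the heart of the matter and is purely a function-space statement. Using \autoref{thm:BDA-decomp} (together with the localized form of its lattice construction, which is also what underlies \autoref{TH:Hf_ess_Gq}(4)) I would fix a decomposition $f=f_1+f_2$ with $\|\bar\partial f_1\|_\infty+\|M_{q,r}(f_2)\|_\infty<\infty$ and $\limsup_{|z|\to\infty}\big(|\bar\partial f_1(z)|+M_{q,r}(f_2)(z)\big)\lesssim G$. Fix $\varepsilon>0$ and choose $R$ so large that $|\bar\partial f_1|\le G+\varepsilon$ and $M_{q,r}(f_2)\le G+\varepsilon$ on $\{|z|\ge R-2r\}$. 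The part $f_2$ is easy: take $\chi\equiv1$ on $B(0,R)$, $\operatorname{supp}\chi\subseteq B(0,2R)$, $0\le\chi\le1$; then $\chi f_2$ is compactly supported in $L^q_{\mathrm{loc}}$, hence $\chi f_2\in\mathrm{VDA}^q$, while $M_{q,r}\big((1-\chi)f_2\big)(z)\le M_{q,r}(f_2)(z)$ is $\le G+\varepsilon$ for $|z|>R-r$ and vanishes for $|z|\le R-r$, so $\|(1-\chi)f_2\|_{\mathrm{BDA}^q}\lesssim G+\varepsilon$. The part $f_1$ is the delicate one: a naive cutoff $\eta f_1$ fails, because $f_1$ itself may be a huge (nearly analytic) function near the cutoff shell, so $|f_1\bar\partial\eta|$ need not be small. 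Instead one cuts off at the level of the form $\bar\partial f_1$, using a \emph{slowly varying} cutoff $\eta\equiv1$ on $B(0,R)$, $\operatorname{supp}\eta\subseteq B(0,2R)$, $|\bar\partial\eta|\lesssim1/R$. Then $\bar\partial\eta\wedge\bar\partial f_1$ is a $\bar\partial$-closed $(0,2)$-form supported in $\{R\le|z|\le2R\}$ with $\|\bar\partial\eta\wedge\bar\partial f_1\|_\infty\lesssim(G+\varepsilon)/R$; solving it by the Bochner–Martinelli–Koppelman kernel gives a $(0,1)$-form $v$ with $\bar\partial v=\bar\partial\eta\wedge\bar\partial f_1$, $\|v\|_\infty\lesssim G+\varepsilon$ and $v(z)\to0$ as $|z|\to\infty$ — the factor $1/R$ from the fat cutoff exactly absorbs the $O(R)$ size of the kernel integral over the shell. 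Now $\eta\bar\partial f_1-v$ is $\bar\partial$-closed, so it has a primitive $h_1$ ($\bar\partial h_1=\eta\bar\partial f_1-v$), and $\bar\partial(f_1-h_1)=(1-\eta)\bar\partial f_1+v\in L^\infty$ with norm $\lesssim G+\varepsilon$. Using the local estimate $G_{q,r}(g)(z)\lesssim\|\bar\partial g\|_{L^\infty(B(z,2r))}$ (local $\bar\partial$-solve on $B(z,2r)$), one gets $\|f_1-h_1\|_{\mathrm{BDA}^q}\lesssim G+\varepsilon$, while $\bar\partial h_1=\eta\bar\partial f_1-v\to0$ at infinity gives $G_{q,r}(h_1)(z)\to0$, i.e. $h_1\in\mathrm{VDA}^q$. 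Setting $h:=h_1+\chi f_2\in\mathrm{VDA}^q$ we have $\|f-h\|_{\mathrm{BDA}^q}\le\|f_1-h_1\|_{\mathrm{BDA}^q}+\|(1-\chi)f_2\|_{\mathrm{BDA}^q}\lesssim G+\varepsilon$; letting $\varepsilon\to0$ proves (i).

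For step (ii), take the $h$ just constructed: $\chi f_2$ is compactly supported in $L^1$, and $h_1$ (being the canonical Bochner–Martinelli–Koppelman primitive of a bounded form with decay) is bounded, so $h\in\mathcal S$ and $f-h\in\mathcal S$. Since $h\in\mathrm{VDA}^q$, the operator $H_h$ is a compact Hankel operator by the compactness criterion of \cite{IDA}, and $H_f-H_h=H_{f-h}$ with $\|H_{f-h}\|_{F_\varphi^p\to L_\varphi^q}\lesssim\|f-h\|_{\mathrm{BDA}^q}$ by the boundedness criterion of \cite{IDA} (valid for $1\le p\le q$). As the construction yields $\|f-h\|_{\mathrm{BDA}^q}\lesssim G\simeq E$, we obtain $E_H\le\|H_f-H_h\|\lesssim E$; combined with $E\le E_H$ and step (i) this closes the chain. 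The main obstacle is step (i), specifically handling $f_1$ without destroying near-analyticity on the cutoff region — hence the passage to the $(0,2)$-form correction $v$ and the quantitative gain obtained by spreading the cutoff over a shell of width comparable to $R$. The routine items to verify are: that \autoref{thm:BDA-decomp} (via its lattice construction) supplies the localized estimates needed to transfer $\limsup$, that the Bochner–Martinelli–Koppelman solution satisfies the stated $L^\infty$ bound and decay, and that the constructed near-minimizer $h$ lies in $\mathcal S$.
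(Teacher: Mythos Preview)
Your overall architecture is the same as the paper's: reduce everything to $G=\limsup_{|z|\to\infty}G_{q,r}(f)(z)$ via \autoref{TH:Hf_ess_Gq}, and produce an explicit $h\in\mathrm{VDA}^q\cap\mathcal S$ that nearly realizes the $\mathrm{BDA}^q$-distance. The genuine difference is in how you handle the smooth part $f_1$. You argue that a naive cutoff $\eta f_1$ fails and therefore pass to the $(0,2)$-form $\bar\partial\eta\wedge\bar\partial f_1$, solve it with an unweighted Bochner--Martinelli--Koppelman kernel using a slowly varying cutoff $|\bar\partial\eta|\lesssim 1/R$ so that the shell integral balances, and then take a primitive $h_1$ of $\eta\bar\partial f_1-v$. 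The paper bypasses all of this by exploiting the \emph{weighted} solution operator $A_\varphi$ already available (\autoref{lem:A_properties}): with an ordinary cutoff $\sigma_t$ (gradient bounded uniformly in $t$, not $1/t$) it simply sets $\psi_t=A_\varphi(\sigma_t\bar\partial f_1)$, so $\bar\partial\psi_t=\sigma_t\bar\partial f_1$ holds on the nose, $\psi_t$ is holomorphic on $\{|z|>t+1\}$ and hence trivially in $\mathrm{VDA}^q$, and $\bar\partial(f_1-\psi_t)=(1-\sigma_t)\bar\partial f_1$ feeds directly into the operator estimate via \autoref{lem:Hf_A} and \autoref{lem:A_properties}. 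No $(0,2)$-form correction, no fat cutoff, no separate verification of decay or of $h_1\in\mathcal S$ is needed. For the second equivalence the paper is also shorter: once part~(1) is known, it just quotes \autoref{thm:Hf_Bounded_Comp} ($H_\psi$ compact $\Leftrightarrow\psi\in\mathrm{VDA}^q$ and $\|H_{f-\psi}\|\simeq\|f-\psi\|_{\mathrm{BDA}^q}$), whereas you rebuild this from the specific near-minimizer $h$.

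What your route buys is independence from the operator $A_\varphi$: your construction uses only unweighted $\bar\partial$-theory, which would transplant to settings where a good weighted solution operator is not already on the shelf. The price is the extra quantitative work (the $O(R)$ kernel estimate over the shell, the convergence and growth control of the BMK primitive of the non--compactly supported form $\eta\bar\partial f_1-v$, and checking $h_1\in\mathcal S$), all of which are plausible but require care; in the present paper these steps are unnecessary because $A_\varphi$ does the job in one line.
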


This paper also investigates the Schatten-$h$ membership  for Hankel operators acting on weighted Fock spaces. The definition given in \cite[Definition 3]{ElFalla2016}.

\begin{theorem}\label{thm:Hf=Sh}
Suppose $h(\sqrt{(\cdot)}): \mathbb{R}^{+} \rightarrow \mathbb{R}^{+}$ is a continuous increasing convex function, suppose $f \in \mathcal{S}$ with $G_{2, r}(f)$ is bounded. Then the following conditions are equivalent:
\begin{enumerate}
\item[(1)] $H_{f} \in S_{h}$;
\item[(2)] there exists a constant $C>0$ such that for some (equivalent any) $r>0$,
\[
\int_{\mathbb{C}^n} h\left(C G_{2, r}(f)(z)\right) d v(z)<\infty.
\]
\end{enumerate}
\end{theorem}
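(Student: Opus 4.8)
The plan is to prove \autoref{thm:Hf=Sh} by transferring the problem to a discretized model on an $r$-lattice and then invoking the known characterization of Schatten-$h$ membership for the Toeplitz-type operator built from the local mean oscillations $G_{2,r}(f)$. The starting point is the decomposition machinery already available: by \autoref{thm:BDA-decomp} (the $\mathrm{BDA}$/$\mathrm{IDA}$ splitting used throughout the paper), since $G_{2,r}(f)$ is bounded we may write $f = f_1 + f_2$ with $\bar\partial f_1 \in L^\infty$ and $M_{2,r}(f_2)$ comparable to $G_{2,r}(f)$ pointwise (up to enlarging $r$). The Schatten-$h$ norm of $H_f$ is then comparable to that of the pair $(H_{f_1}, H_{f_2})$, and each piece is understood: $H_{f_2}$ is controlled by the multiplication/embedding operator with symbol $|f_2|$, hence by the averaging function $M_{2,r}(f_2)(z) \simeq G_{2,r}(f)(z)$; and $\|H_{f_1}(k_z)\|_{2,\varphi} \simeq |\bar\partial f_1(z)|$ is itself $\lesssim G_{2,r}(f)$ after absorbing the oscillation into $f_2$. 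So the first key step is reducing everything to estimating, in the Schatten-$h$ scale, the positive operator
\[
T_\mu = \sum_{j} \lambda_j \,\langle \cdot, k_{a_j}\rangle\, k_{a_j}, \qquad \lambda_j \simeq G_{2,r}(f)(a_j)^2,
\]
for an $r$-lattice $\{a_j\}$, together with showing that $\int_{\mathbb{C}^n} h(CG_{2,r}(f)(z))\,dv(z) < \infty$ is equivalent to $\sum_j h(c\,G_{2,r}(f)(a_j)) < \infty$ — the latter is routine since $G_{2,r}(f)$ has a Lipschitz-type regularity that makes the sum and integral comparable when $h$ is increasing and convex (sub/superadditivity arguments on balls $B(a_j,2r)$).

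The heart of the argument is the two-sided estimate relating $\sum_j h(c\,\lambda_j^{1/2})$ to $\operatorname{tr} h(|H_f|)$ (equivalently $\operatorname{tr} h(T_\mu^{1/2})$ up to constants). For the lower bound, I would test $H_f$ (or its adjoint combinations) against the normalized kernels $k_{a_j}$: using that $\{k_{a_j}\}$ is ``almost orthonormal'' — $\sum_{j} |\langle k_{a_i}, k_{a_j}\rangle| \le C$ uniformly by the exponential off-diagonal decay of $K$ in property (1) — one gets $\langle H_f^* H_f k_{a_j}, k_{a_j}\rangle \gtrsim G_{2,r}(f)(a_j)^2$, and then a standard lemma (the one underlying \cite{ElFalla2016}, or the Schatten-$h$ analogue of the ``$S_p$ of an operator dominates $\ell^p$ of its almost-diagonal'') gives $\operatorname{tr} h(C|H_f|) \gtrsim \sum_j h(c\,G_{2,r}(f)(a_j))$ using convexity/monotonicity of $h(\sqrt{\cdot})$ to pass from diagonal entries to the trace of $h$ applied to the operator. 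For the upper bound, assuming $\sum_j h(c\,G_{2,r}(f)(a_j)) < \infty$, I would decompose $H_f$ as a sum of rank-controlled pieces adapted to the lattice, bound $H_{f_2}$ by the positive operator $T_\mu$ above and use the convexity of $h$ together with Jensen/Rotfel'd-type inequalities ($\operatorname{tr} h(A+B) \le \operatorname{tr} h(2A) + \operatorname{tr} h(2B)$ type bounds, valid for $h$ with $h(\sqrt{\cdot})$ convex) to reduce to $\operatorname{tr} h(C T_\mu^{1/2}) \lesssim \sum_j h(c\,\lambda_j^{1/2})$, which follows from the almost-orthogonality of $\{k_{a_j}\}$ by comparing $T_\mu$ with the diagonal operator $\operatorname{diag}(\lambda_j)$ in the frame $\{k_{a_j}\}$; the $H_{f_1}$ part is handled by the same estimate since its kernel is pointwise dominated by $|\bar\partial f_1(z)| e^{-\theta|z-w|/2}$ up to constants.

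The main obstacle I anticipate is the functional-calculus bookkeeping for a general convex $h$ with $h(\sqrt{\cdot})$ convex: unlike the power case $h(t)=t^p$, one cannot use simple ideal/duality tricks, so each passage between ``diagonal data'' and ``$\operatorname{tr} h(|T|)$'' must go through the right convexity inequality — concretely, the inequalities of the form $\sum_i h(\langle A e_i, e_i\rangle) \le \operatorname{tr} h(A)$ for $A \ge 0$ and any orthonormal $\{e_i\}$ (Peierl–Bogoliubov / Jensen for the trace), and, in the reverse direction, a block-diagonalization that loses only a multiplicative constant in the argument of $h$. Handling the frame $\{k_{a_j}\}$ — which is not orthonormal, only a frame with a bounded, boundedly invertible Gram operator after passing to a sub-lattice $\Lambda_k$ as in \eqref{eq:lambda_k-def} — requires splitting $\Lambda$ into the $K^{2n}$ separated sublattices $\Lambda_k$, proving the estimate on each (where separation makes the Gram operator close to the identity for $K$ large), and summing the $K^{2n}$ contributions; convexity of $h$ keeps this sum under control. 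A secondary technical point is justifying that the choice of $r$ is immaterial (the ``some/any $r$'' clause), which follows from the now-standard comparability $G_{2,r}(f) \simeq G_{2,r'}(f)$ in the $L^s$-averaged sense together with the integral-versus-sum comparison mentioned above; I would state this as a preliminary lemma and then fix one convenient $r$ throughout.
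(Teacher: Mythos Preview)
Your overall plan is sound and shares the paper's skeleton (decompose $f=f_1+f_2$, reduce the Hankel pieces to multiplication/Toeplitz operators, discretize on a lattice), but the two key technical steps are executed differently. For $(1)\Rightarrow(2)$ you propose a Peierls--Bogoliubov trace inequality applied to $\langle H_f^*H_f\,k_{a_j},k_{a_j}\rangle$, with the sublattice splitting as a workaround for the non-orthonormality of $\{k_{a_j}\}$; the paper instead sandwiches $H_f$ between bounded operators $T=\sum_a k_a\otimes e_a$ and $B=\sum_a g_a\otimes e_a$ (with $\{e_a\}$ a genuine orthonormal basis and $g_a$ the normalized localized Hankel image $\chi_{B(a,r)}H_fk_a/\|\cdot\|$), so that $\langle B^*H_fT\,e_a,e_a\rangle=\|\chi_{B(a,r)}H_fk_a\|$ exactly and the ideal inequality $s_j(B^*H_fT)\le C\,s_j(H_f)$ delivers the singular-value comparison in one line --- no frame issues at all. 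For $(2)\Rightarrow(1)$ you propose to prove $\operatorname{tr} h(CT_\mu^{1/2})\lesssim\sum_j h(c\lambda_j^{1/2})$ from scratch via Rotfel'd-type inequalities and almost-orthogonality; the paper instead observes $M_g^*M_g=T_{|g|^2}$ for $g\in\{\bar\partial f_1,f_2\}$ and invokes the already-established Schatten-$h$ criterion for Toeplitz operators (\autoref{thm:toeplitzSh}, from \cite{AHLT}), which reduces $M_g\in S_h$ to $\int h\bigl(C(\widetilde{|g|^2})^{1/2}\bigr)\,dv<\infty$, and a short Jensen argument relates this to $\int h(CM_{2,r}(g))\,dv\le\int h(CG_{2,r}(f))\,dv$. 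Your route can be made to work but essentially reproves a chunk of \autoref{thm:toeplitzSh}, and the functional-calculus obstacles you correctly anticipate (general convex $h$, frames versus orthonormal bases) are precisely what the paper sidesteps by citing that result. One imprecision worth flagging: the kernel of $H_{f_1}$ is not pointwise dominated by $|\bar\partial f_1(z)|e^{-\theta|z-w|/2}$; what is actually used (and what you need) is the operator bound $\|H_{f_1}g\|_{2,\varphi}\lesssim\|g\,\bar\partial f_1\|_{2,\varphi}$ coming from \autoref{lem:Hf_A} and \autoref{lem:A_properties}.
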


The structure of this paper is organized as follows. Section 2 establishes the preliminary framework, first presenting the $\bar{\partial}$-equation and its solution operator, then developing the decomposition theory for $\mathrm{BDA}$ functions. Section 3 establishes the proof of our main result (\autoref{TH:Hf_ess_Gq} and \autoref{TH:Hf_ess_H}).  Finally, Section 4 investigates Schatten-$h$ class membership criteria for Hankel operators acting on the weighted Fock space $F^{2}_\varphi$.

In this paper, for $A, B>0$, the notation $A \lesssim B$ indicates that there exists some positive constant $C$ independent of the functions being considered such that $A \leq C B$ and $A \simeq B$ means that both $A \lesssim B$ and $B \lesssim A$ hold.

\section{Preliminaries}

\subsection{The $\bar{\partial}$-equation and the solution operator}

A pivotal tool in our analysis is the solution operator for the $\bar{\partial}$-equation. Given a weight function $\varphi$ satisfying \eqref{weight_varphi}, we define the operator $A_\varphi$ mapping $(0,1)$-forms to functions as in \cite{IDA}:
\begin{equation}\label{A_defenition}
A_\varphi(\omega)(z) = \int_{\mathbb{C}^n} e^{\langle 2\partial\varphi(\xi), z-\xi \rangle}
\sum_{j=0}^{n-1} \frac{
    \omega(\xi) \wedge \partial|\xi-z|^2 \wedge (2\bar{\partial}\partial\varphi(\xi))^j \wedge (\bar{\partial}\partial|\xi-z|^2)^{n-1-j}
}{j! |\xi-z|^{2n-2j}},
\end{equation}
where $\langle \partial\varphi(\xi), z-\xi \rangle = \sum_{k=1}^n \frac{\partial \varphi}{\partial \xi_k}(\xi)(z_k - \xi_k)$ .This operator provides a solution to the fundamental equation $\bar{\partial} u = \omega$. The properties of this operator are summarized in the following lemmas.

\begin{lemma}\cite[Lemma 2.4]{IDA}\label{lem:A_properties}
For $1 \leq p \leq \infty$, the following hold:
\begin{enumerate}
\item[(i)] There exists a constant $C > 0$ such that
\[
\| A_\varphi(\omega) \|_{p,\varphi} \leq C \| \omega \|_{p,\varphi}, \quad \forall \omega \in L_{0,1},
\]
where $L_{0,1}$ denotes the space of $(0,1)$-forms on $\mathbb{C}^n$.

\item[(ii)]  For $g \in \Gamma$ and $f \in C^2(\mathbb{C}^n)$ with $\bar{\partial}f \in L^p$,
\(
\bar{\partial} \left[ A_\varphi(g \bar{\partial}f) \right] = g \bar{\partial}f.
\)
\end{enumerate}
\end{lemma}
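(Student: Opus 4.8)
The plan is to treat the two assertions separately: (i) is an integral-operator boundedness statement proved by a kernel estimate together with Schur's test, while (ii) is the defining $\bar\partial$-solving property of the Berndtsson--Andersson kernel underlying $A_\varphi$, proved via a Koppelman-type homotopy formula. The single analytic input driving both parts is the uniform strict convexity \eqref{weight_varphi}: by Taylor's formula along the real segment from $\xi$ to $z$,
\[
\varphi(z)-\varphi(\xi)-2\operatorname{Re}\langle\partial\varphi(\xi),z-\xi\rangle=\int_0^1(1-t)\,\big\langle\operatorname{Hess}_{\mathbb{R}}\varphi(\xi+t(z-\xi))(z-\xi),z-\xi\big\rangle\,dt,
\]
which lies between $\tfrac{m}{2}|z-\xi|^2$ and $\tfrac{M}{2}|z-\xi|^2$. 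Hence the holomorphic-in-$z$ weight factor obeys $e^{2\operatorname{Re}\langle\partial\varphi(\xi),z-\xi\rangle}\,e^{\varphi(\xi)-\varphi(z)}\leq e^{-\frac{m}{2}|z-\xi|^2}$, which supplies Gaussian decay once the weight $e^{-\varphi}$ is moved across the kernel.

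For (i), I would write $\|A_\varphi(\omega)\|_{p,\varphi}=\|e^{-\varphi}A_\varphi(\omega)\|_{L^p}$ and read off the effective kernel $\mathcal{K}(z,\xi)=e^{-\varphi(z)}\big(\text{integrand of }A_\varphi\big)e^{\varphi(\xi)}$ acting on the weighted coefficients $e^{-\varphi(\xi)}|\omega(\xi)|$. The differential-form factors are elementary: $\partial|\xi-z|^2$ has coefficients of size $|z-\xi|$, while $(2\bar\partial\partial\varphi)^j$ and $(\bar\partial\partial|\xi-z|^2)^{n-1-j}$ have bounded coefficients, since $\varphi\in C^2$ has bounded Hessian and $\bar\partial\partial|\xi-z|^2$ is the standard K\"ahler form. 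The $j$-th summand therefore contributes a factor $\lesssim|z-\xi|^{1-2n+2j}$, whose worst case $j=0$ gives the singularity $|z-\xi|^{1-2n}$, locally integrable on $\mathbb{R}^{2n}$. Combining with the Gaussian factor from the convexity estimate yields
\[
|\mathcal{K}(z,\xi)|\lesssim e^{-\frac{m}{2}|z-\xi|^2}\,|z-\xi|^{1-2n},
\]
which is integrable in $\xi$ uniformly in $z$ and in $z$ uniformly in $\xi$. Schur's test with the constant weight then gives boundedness of $A_\varphi$ on $L^p_\varphi$ for every $1\leq p\leq\infty$, the endpoints $p=1,\infty$ following directly from the two uniform integral bounds.

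For (ii), I would first note that $\omega:=g\,\bar\partial f$ is $\bar\partial$-closed: since $g\in\Gamma$ is entire, $\bar\partial g=0$, so $\bar\partial\omega=\bar\partial g\wedge\bar\partial f+g\,\bar\partial\bar\partial f=0$, and the hypothesis $\bar\partial f\in L^p$ together with part (i) guarantees that $A_\varphi(\omega)\in L^p_\varphi$ is well defined. The operator $A_\varphi$ is a Berndtsson--Andersson kernel with Cauchy--Fantappi\`e generating section $s=\partial|\xi-z|^2$ (so that $\langle s,\xi-z\rangle=|\xi-z|^2\neq0$ off the diagonal) and weight $e^{\langle2\partial\varphi(\xi),z-\xi\rangle}$, which is holomorphic in $z$ and equals $1$ on the diagonal. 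The core of the argument is the Koppelman homotopy identity
\[
\omega=\bar\partial_z A_\varphi(\omega)+A_\varphi(\bar\partial\omega),
\]
valid for smooth, suitably decaying $(0,1)$-forms: one differentiates the kernel under the integral, uses that $s$ is a generating section and that the weight is holomorphic in $z$ to see that only the diagonal contribution survives as $\omega(z)$, while the boundary-at-infinity term vanishes thanks to the Gaussian decay established in (i). Feeding $\bar\partial\omega=0$ into this identity gives $\bar\partial A_\varphi(\omega)=\omega=g\,\bar\partial f$. Finally, I would pass from compactly supported smooth forms to the given $\omega$ by a cutoff and approximation argument, using the $L^p$ bound of (i) and the kernel decay to justify differentiating under the integral sign and taking limits in the distributional sense.

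The main obstacle is the rigorous derivation of the Koppelman homotopy formula for this specific weighted kernel and the verification that its error and boundary terms vanish: this demands the full Cauchy--Fantappi\`e bookkeeping---tracking the bidegrees of the wedge products, exploiting $\bar\partial_z$ of the holomorphic weight and the generating property of $s$, and controlling the diagonal singularity as a principal value---rather than any single estimate. The boundedness statement (i), by contrast, is routine once the convexity-driven Gaussian bound on $\mathcal{K}$ is in hand.
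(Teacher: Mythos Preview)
The paper does not prove this lemma; it is stated as a direct citation of \cite[Lemma~2.4]{IDA} with no argument supplied. Your sketch is a correct and standard reconstruction of the underlying proof: part~(i) via the Taylor-expansion estimate $e^{2\operatorname{Re}\langle\partial\varphi(\xi),z-\xi\rangle}e^{\varphi(\xi)-\varphi(z)}\le e^{-\frac{m}{2}|z-\xi|^2}$ followed by Schur's test on the resulting convolution-type kernel $e^{-\frac{m}{2}|z-\xi|^2}|z-\xi|^{1-2n}$, and part~(ii) via the Koppelman homotopy identity for the Berndtsson--Andersson weighted kernel applied to the $\bar\partial$-closed form $g\,\bar\partial f$. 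The paper itself relies on precisely this $\bar\partial$-solving mechanism when it invokes \cite[Proposition~10]{Berndtsson1982} in the proof of Theorem~\ref{TH:Hf_ess_H}. Since there is no proof in the present paper to compare against, your outline stands on its own as the expected argument and matches the approach taken in the cited source.
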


\begin{lemma}\cite[Corollary 2.5]{IDA}\label{lem:Hf_A}
Let $f \in \mathcal{S} \cap C^1(\mathbb{C}^n)$ with $\bar{\partial}f \in L^s$ ($1 \leq s \leq \infty$). Then for any $g \in \Gamma$,
\[
H_f(g) = A_\varphi(g \bar{\partial}f) - P\left(A_\varphi(g \bar{\partial}f)\right).
\]

\end{lemma}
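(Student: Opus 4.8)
The plan is to reduce the asserted identity to the reproducing property of the Bergman projection, exploiting two facts: that $A_\varphi$ inverts $\bar\partial$ on symbols of the form $g\,\bar\partial f$, and that every $g \in \Gamma$ is entire. Fix $g \in \Gamma$ and set $u := A_\varphi(g\,\bar\partial f)$. Since $g$ is a finite linear combination of reproducing kernels it is holomorphic, so $\bar\partial g = 0$, and because $f \in C^1(\mathbb{C}^n)$ the Leibniz rule gives $\bar\partial(fg) = (\bar\partial f)\,g + f\,\bar\partial g = g\,\bar\partial f$. On the other hand, \cref{lem:A_properties}(ii) yields $\bar\partial u = g\,\bar\partial f$. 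Subtracting, $\bar\partial(fg - u) = 0$, so the function $F := fg - u$ is entire.

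The heart of the argument is to place $F$ in a space on which $P$ acts as the identity. By the definition of $\mathcal{S}$ we have $fg \in L^1_\varphi$. For $u$, \cref{lem:A_properties}(i) with $p = 1$ gives $\|u\|_{1,\varphi} \le C\,\|g\,\bar\partial f\|_{1,\varphi}$, so it suffices to verify $g\,\bar\partial f \in L^1_\varphi$. Writing $g = \sum_j a_j K(\cdot, z_j)$ and invoking the kernel upper bound $|K(z,z_j)| \lesssim e^{\varphi(z)+\varphi(z_j)} e^{-\theta|z-z_j|}$, the function $z \mapsto g(z)\,e^{-\varphi(z)}$ decays exponentially and hence lies in every $L^t(\mathbb{C}^n, dv)$; in particular it lies in the conjugate space $L^{s'}$. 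Pairing this against $\bar\partial f \in L^s$ via Hölder's inequality yields $\int_{\mathbb{C}^n} |g\,\bar\partial f|\,e^{-\varphi}\,dv < \infty$, i.e. $g\,\bar\partial f \in L^1_\varphi$. Consequently $u \in L^1_\varphi$, and therefore $F = fg - u \in L^1_\varphi$; being entire, $F \in F^1_\varphi$.

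Finally, since $P$ is bounded on $L^1_\varphi$ and reproduces elements of $F^1_\varphi$, we have $P F = F$, that is $P(fg) - P(u) = fg - u$. Rearranging gives $fg - P(fg) = u - P(u)$. The left-hand side is precisely $H_f(g) = (I-P)(fg)$, while the right-hand side equals $A_\varphi(g\,\bar\partial f) - P\big(A_\varphi(g\,\bar\partial f)\big)$, which is the claimed identity.

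The step I expect to be the main obstacle is the integrability verification: one must confirm $fg - u \in L^1_\varphi$ rigorously, so that $P(fg)$ and $P(u)$ are individually well-defined, so that $P(fg - u) = P(fg) - P(u)$ by linearity, and so that the reproducing property applies to $F$. This is where the exponential decay of the kernels generating $\Gamma$, the hypothesis $\bar\partial f \in L^s$, and the $L^1_\varphi$-boundedness of $A_\varphi$ all combine. Once $F$ is identified as an entire $L^1_\varphi$ function, the remainder is purely algebraic.
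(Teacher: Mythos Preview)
The paper does not supply its own proof of this lemma; it is quoted without argument as Corollary~2.5 of \cite{IDA}. Your proof is the natural one and is correct: show that $F := fg - A_\varphi(g\,\bar\partial f)$ is entire (via \cref{lem:A_properties}(ii) and $\bar\partial g = 0$), place $F$ in $F^1_\varphi$ using the kernel decay estimate together with H\"older against $\bar\partial f \in L^s$, and then invoke $PF = F$ to rearrange into the claimed identity. This is exactly the mechanism behind the result in \cite{IDA}.

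One small point worth flagging: \cref{lem:A_properties}(ii), as stated in this paper, assumes $f \in C^2(\mathbb{C}^n)$, whereas the lemma you are proving assumes only $f \in C^1(\mathbb{C}^n)$. This is almost certainly a transcription inconsistency between the two cited statements rather than a genuine obstacle (the integral formula for $A_\varphi$ and the distributional identity $\bar\partial A_\varphi(\omega) = \omega$ do not require second derivatives of $f$), but strictly speaking you are invoking (ii) outside its stated hypotheses and should either note this or appeal directly to the source.
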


\subsection{Decomposition of BDA Functions}

 Using the techniques developed by Hu and Virtanen \cite{IDA}, we establish the following decomposition method for characterizing symbol function.

Given $f \in L^q_{\text{loc}}$, we consider a $r$-lattice $\{a_{j}\}_{j=1}^{\infty}$ covering $\mathbb{C}^n$ with an associated partition of unity $\{\psi_j\}$. For each $j$, we construct local holomorphic approximants $h_j$ on $B(a_j, t)$ that minimize $M_{q,t}(f-h_j)(a_j)$. This leads to the decomposition:
\begin{equation}\label{eq:function_decop}
f = f_1 + f_2, \quad \text{where} \quad f_1 = \sum_{j=1}^\infty h_j \psi_j \quad \text{and} \quad f_2 = f - f_1.
\end{equation}

The crucial observation is that both $\bar{\partial} f_1$ and $f_2$ can be effectively controlled by the local oscillation of $f$. The following theorem provides a complete characterization of $\mathrm{BDA}^q$ and $\mathrm{VDA}^q$ spaces:

\begin{theorem}\cite[Corollary 3.9]{IDA}\label{thm:BDA-decomp}
Suppose $1 \leq q < \infty$, and $f \in L_{\mathrm{loc}}^{q}$. Then $f \in \operatorname{BDA}^{q}$ (or $\operatorname{VDA}^{q}$) if and only if $f$ admits a decomposition $f = f_{1} + f_{2}$ such that
\begin{align}\label{eq:3-26}
f_{1} &\in C^{2}(\mathbb{C}^{n}), \quad \bar{\partial} f_{1} \in L_{0,1}^{\infty} \quad \left(\text{or } \lim_{z \rightarrow \infty} |\bar{\partial} f_{1}| = 0\right)
\end{align}
and
\begin{align}
 M_{q, r}(f_{2}) &\in L^{\infty} \quad \left(\text{or } \lim_{z \rightarrow \infty} M_{q, r}(f_{2}) = 0\right) \label{eq:3-27}
\end{align}
for some (or any) $r > 0$. Furthermore,
\[
\|f\|_{\mathrm{BDA}^{q}} \simeq \inf \left\{ \left\|\bar{\partial} f_{1}\right\|_{L_{0,1}^{\infty}} + \left\|M_{q, r}(f_{2})\right\|_{L^{\infty}} \right\}
\]
where the infimum is taken over all possible decompositions $f = f_{1} + f_{2}$ that satisfy \eqref{eq:3-26} and \eqref{eq:3-27}.
\end{theorem}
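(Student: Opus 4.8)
The statement is a two-sided characterization, so the plan is to prove the two implications separately and then read the norm equivalence off the quantitative bounds produced in each direction. Throughout I would fix an auxiliary radius $t>0$, take a $t$-lattice $\{a_j\}$ enjoying the bounded-overlap property recorded in the preliminaries, and fix a smooth partition of unity $\{\psi_j\}$ with $\operatorname{supp}\psi_j\subset B(a_j,t)$, $\sum_j\psi_j\equiv1$ and $\|\bar\partial\psi_j\|_\infty\lesssim 1/t$. Two ingredients I would isolate first are the \emph{radius independence} of $G_{q,r}$, namely that $G_{q,r_1}(f)$ and $G_{q,r_2}(f)$ are pointwise comparable up to constants (this is what makes "for some $r$" and "for any $r$" interchangeable and lets the construction radius $t$ be decoupled from the radius $r$ in $M_{q,r}(f_2)$), and the elementary interior estimate that a holomorphic function on a ball has its pointwise values dominated by its $L^q$-average over a slightly smaller concentric ball.

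For the easy ("if") direction, suppose $f=f_1+f_2$ with $\bar\partial f_1\in L^\infty_{0,1}$ and $M_{q,r}(f_2)\in L^\infty$. On each ball $B(z,r)$ I would solve $\bar\partial u=\bar\partial f_1$ locally with the estimate $M_{q,r}(u)(z)\lesssim \sup_{B(z,r)}|\bar\partial f_1|$ (a H\"ormander-type bound on a fixed ball, or the explicit kernel solution), so that $h:=f_1-u$ is holomorphic on $B(z,r)$. Then $f-h=u+f_2$, and Minkowski's inequality gives $G_{q,r}(f)(z)\le M_{q,r}(f-h)(z)\lesssim\|\bar\partial f_1\|_\infty+\|M_{q,r}(f_2)\|_\infty$; taking the supremum over $z$ yields $\|f\|_{\mathrm{BDA}^q}\lesssim\|\bar\partial f_1\|_\infty+\|M_{q,r}(f_2)\|_\infty$. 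Since the local bound is in terms of $\sup_{B(z,r)}|\bar\partial f_1|$, the same estimate localizes to show $G_{q,r}(f)(z)\to0$ whenever $\bar\partial f_1(z)\to0$ and $M_{q,r}(f_2)(z)\to0$, giving the $\mathrm{VDA}^q$ half.

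For the harder ("only if") direction I would use the explicit decomposition \eqref{eq:function_decop} with nested radii: for each $j$ choose a holomorphic $h_j$ on the enlarged ball $B(a_j,2t)$ with $M_{q,2t}(f-h_j)(a_j)\le 2\,G_{q,2t}(f)(a_j)$ (a near-minimizer, sidestepping any existence-of-minimizer issue), and set $f_1=\sum_j h_j\psi_j$, $f_2=f-f_1$. Smoothness $f_1\in C^2$ is immediate from local finiteness. For $\bar\partial f_1$ I would exploit $\sum_j\bar\partial\psi_j\equiv0$ and $\bar\partial h_j=0$ to write, for a fixed $z$ and any $k$ with $z\in B(a_k,t)$,
\[
\bar\partial f_1(z)=\sum_j h_j(z)\,\bar\partial\psi_j(z)=\sum_j\bigl(h_j(z)-h_k(z)\bigr)\bar\partial\psi_j(z).
\]
Each contributing $j$ has $z\in B(a_j,t)$, hence $B(z,t)\subset B(a_j,2t)\cap B(a_k,2t)$, where $h_j-h_k$ is holomorphic with $L^q$-average $\lesssim G_{q,2t}(f)(a_j)+G_{q,2t}(f)(a_k)$; the interior estimate converts this into $|h_j(z)-h_k(z)|\lesssim\sup_j G_{q,2t}(f)(a_j)$, and with $\|\bar\partial\psi_j\|_\infty\lesssim1/t$ and bounded overlap this gives $\|\bar\partial f_1\|_\infty\lesssim\|f\|_{\mathrm{BDA}^q}$. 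Writing $f_2=\sum_j(f-h_j)\psi_j$, on any $B(z,r)$ only a uniformly bounded number of terms contribute and each $\psi_j$ is supported where $f-h_j$ is controlled, so $M_{q,r}(f_2)(z)\lesssim\max_j G_{q,2t}(f)(a_j)\lesssim\|f\|_{\mathrm{BDA}^q}$. Reading the constants backwards yields $\inf\{\|\bar\partial f_1\|_\infty+\|M_{q,r}(f_2)\|_\infty\}\lesssim\|f\|_{\mathrm{BDA}^q}$, which with the easy direction proves the norm equivalence; the $\mathrm{VDA}^q$ statement follows verbatim, since every estimate is controlled by $G_{q,2t}(f)(a_j)$ with $a_j\to\infty$.

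The main obstacle is the $\bar\partial f_1$ estimate in the last paragraph. The cancellation identity is harmless, but converting the $L^q$-closeness of neighboring approximants into a \emph{pointwise} bound requires that, for every $z$ in the support of $\bar\partial\psi_j$, a fixed-size ball $B(z,t)$ still lie inside the overlap $B(a_j,2t)\cap B(a_k,2t)$, so that the interior estimate has a uniform constant. This is exactly why the construction uses the nested radii $t$ and $2t$, and it is also where the radius independence of $G_{q,r}$ and the alignment of the partition scale with the radius $r$ in $M_{q,r}(f_2)$ carry most of the bookkeeping. By contrast, the local $\bar\partial$-solvability in the easy direction is classical on a fixed ball and I do not expect it to cause difficulty.
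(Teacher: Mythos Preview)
The paper does not prove this statement; it is quoted verbatim as \cite[Corollary 3.9]{IDA} and used as a black box. The only trace of the argument in the paper is the description preceding \eqref{eq:function_decop}: an $r$-lattice $\{a_j\}$, a subordinate partition of unity $\{\psi_j\}$, local holomorphic near-minimizers $h_j$ of $M_{q,t}(f-h_j)(a_j)$, and the decomposition $f_1=\sum_j h_j\psi_j$, $f_2=f-f_1$. Your ``only if'' direction reproduces exactly this construction, and your cancellation computation $\bar\partial f_1=\sum_j(h_j-h_k)\bar\partial\psi_j$ together with the interior estimate on $h_j-h_k$ is precisely how \cite[Lemma~3.6]{IDA} (invoked several times later in the paper) obtains the bounds $|\bar\partial f_1|\lesssim G_{q,r}(f)$ and $M_{q,r}(f_2)\lesssim G_{q,r}(f)$. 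Your ``if'' direction via local $\bar\partial$-solvability on a fixed ball is the standard route and is correct. So your proposal is sound and follows the same line as the source the paper cites; there is simply nothing further in the present paper to compare against.
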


Applying Theorem \ref{thm:BDA-decomp} to the case when $s=\infty$ (the $\text{BDA}^q$ space), Hu and Virtanen obtained a complete characterization for the boundedness and compactness of Hankel operators. This result serves as the foundation for our work.

\begin{theorem}\cite[Theorem 1.1(a)]{IDA}\label{thm:Hf_Bounded_Comp}
Let $f \in \mathcal{S}$ and suppose that  $ \varphi \in C^{2}\left(\mathbb{C}^{n}\right) $ is real valued and  $\operatorname{Hess}_{\mathbb{R}} \varphi \simeq \mathrm{E}$.
For $0 < p \leq q < \infty$ and $q \geq 1$, the Hankel operator $H_{f}:  F^{p}_{\varphi} \rightarrow L^{q}(\varphi)$ is bounded if and only if $f \in \mathrm{BDA}^{q}$, and $ H_{f}  $ is compact if and only if  $f \in \mathrm{VDA}^{q}$. Moreover, we have the norm estimate
\[
\left\|H_{f}\right\| \simeq \|f\|_{\mathrm{BDA}^{q}}.
\]
\end{theorem}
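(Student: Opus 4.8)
The plan is to establish the three equivalences — boundedness, compactness, and the norm formula — by combining the $\mathrm{BDA}^q/\mathrm{VDA}^q$ decomposition of \autoref{thm:BDA-decomp} with a test of $H_f$ against the normalized reproducing kernels $k_z$, letting the off‑diagonal decay and the near‑diagonal lower bound of the Bergman kernel carry the analytic weight. For the \emph{upper bound} (so that $f\in\mathrm{BDA}^q$ forces $H_f$ bounded with $\|H_f\|\lesssim\|f\|_{\mathrm{BDA}^q}$), I would fix a decomposition $f=f_1+f_2$ with $\bar\partial f_1\in L^\infty_{0,1}$ and $M_{q,r}(f_2)\in L^\infty$ and split $H_f=H_{f_1}+H_{f_2}$ on $\Gamma$. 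For $H_{f_1}$: since $f_1\in C^2$, $\bar\partial f_1$ is bounded, and $f_1\in\mathcal S$ (because $f_2\in\mathcal S$, which follows from $M_{q,r}(f_2)\in L^\infty$), \autoref{lem:Hf_A} rewrites $H_{f_1}g=(I-P)A_\varphi(g\,\bar\partial f_1)$; then \autoref{lem:A_properties}(i), the boundedness of $P$ on $L^q_\varphi$, the pointwise bound $|g\,\bar\partial f_1|\le\|\bar\partial f_1\|_\infty|g|$, and the embedding $F^p_\varphi\hookrightarrow F^q_\varphi$ (valid since $p\le q$) yield $\|H_{f_1}g\|_{q,\varphi}\lesssim\|\bar\partial f_1\|_\infty\|g\|_{p,\varphi}$. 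For $H_{f_2}g=(I-P)(f_2g)$ it suffices to bound $\|f_2g\|_{q,\varphi}$: over an $r$-lattice $\{a_j\}$ I would dominate $|ge^{-\varphi}|^q$ on $B(a_j,r)$ by its average over $B(a_j,2r)$ (sub‑mean‑value property of the entire function $g$), multiply by $|f_2|^q$, use $\int_{B(a_j,r)}|f_2|^q\lesssim M_{q,r}(f_2)(a_j)^q$, and sum using the bounded overlap of $\{B(a_j,2r)\}$, arriving at $\|f_2g\|_{q,\varphi}\lesssim\|M_{q,r}(f_2)\|_\infty\|g\|_{q,\varphi}\lesssim\|M_{q,r}(f_2)\|_\infty\|g\|_{p,\varphi}$. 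Taking the infimum over decompositions and using density of $\Gamma$ in $F^p_\varphi$ closes this direction.

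For the \emph{lower bound} (so that boundedness of $H_f$ forces $f\in\mathrm{BDA}^q$ with $\|f\|_{\mathrm{BDA}^q}\lesssim\|H_f\|$), I would test on $k_z$. Using \eqref{eq:K_lower_estimate} and $K(z,z)\simeq e^{2\varphi(z)}$, $k_z$ is zero‑free on $B(z,r_0)$ with $|k_z(w)e^{-\varphi(w)}|\gtrsim1$ there, so $h_z:=P(fk_z)/k_z$ is holomorphic on $B(z,r_0)$ and $f-h_z=(fk_z-P(fk_z))/k_z=H_f(k_z)/k_z$ on that ball. Hence
\[
\begin{aligned}
G_{q,r_0}(f)(z)^q&\le\frac{1}{|B(z,r_0)|}\int_{B(z,r_0)}|f-h_z|^q\,dv\lesssim\int_{B(z,r_0)}\Big|\frac{H_f k_z(w)e^{-\varphi(w)}}{k_z(w)e^{-\varphi(w)}}\Big|^q\,dv(w)\\
&\lesssim\|H_f k_z\|_{q,\varphi}^q\le\|H_f\|^q\|k_z\|_{p,\varphi}^q\lesssim\|H_f\|^q,
\end{aligned}
\]
uniformly in $z$, and then the $r$-independence of the $\mathrm{IDA}$ scale together with \autoref{thm:BDA-decomp} gives $\|f\|_{\mathrm{BDA}^q}\simeq\|G_{q,r}(f)\|_\infty\lesssim\|H_f\|$.

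For \emph{compactness}: if $f\in\mathrm{VDA}^q$ I would take a decomposition as in \autoref{thm:BDA-decomp} with $|\bar\partial f_1|\to0$ and $M_{q,r}(f_2)\to0$ at infinity, and for each $R>0$ split $\bar\partial f_1$ and $f_2$ into their restrictions to $B(0,R)$ and to its complement; by the estimates above the complement‑supported pieces have operator norm $\lesssim\sup_{|w|>R-2r}\big(|\bar\partial f_1(w)|+M_{q,r}(f_2)(w)\big)\to0$, while the $B(0,R)$-supported pieces are compact because, for $g$ in the unit ball of $F^p_\varphi$, a normal‑families argument yields subsequential local uniform convergence, so multiplication by the $L^q$-function $\chi_{B(0,R)}f_2$ — resp.\ multiplication by the bounded compactly supported form $\chi_{B(0,R)}\bar\partial f_1$ followed by the bounded operator $A_\varphi$ — is compact $F^p_\varphi\to L^q_\varphi$, and composing with the bounded operator $I-P$ preserves compactness; thus $H_f$ is a norm limit of compact operators. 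Conversely, if $H_f$ is compact then $\|H_f k_z\|_{q,\varphi}\to0$ as $|z|\to\infty$ ($k_z\to0$ weakly in $F^p_\varphi$ for $p>1$, with a standard subsequence/pointwise argument for $p\le1$), and the displayed estimate then forces $G_{q,r_0}(f)(z)\to0$, i.e.\ $f\in\mathrm{VDA}^q$.

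The step I expect to be the main obstacle is the lower bound: one has to be sure that $h_z=P(fk_z)/k_z$ is a legitimate holomorphic competitor on $B(z,r_0)$ — precisely where \eqref{eq:K_lower_estimate} and $K(z,z)\simeq e^{2\varphi(z)}$ enter — and then transfer the resulting control of $G_{q,r_0}(f)$ to every radius $r$, which genuinely relies on the $r$-independence of the $\mathrm{IDA}$ spaces from \cite{IDA}. A secondary difficulty is making the "multiplication by a compactly supported symbol is compact" assertions precise in the quasi‑norm when $p<1$, and verifying the $p=1$ version of the weak null‑convergence of $\{k_z\}$.
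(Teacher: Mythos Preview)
This theorem is not proved in the present paper: it is stated with the citation \cite[Theorem~1.1(a)]{IDA} and used as a black box in Sections~3 and~4, so there is no in-paper proof to compare your proposal against.

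That said, your sketch follows essentially the route of \cite{IDA}: for sufficiency, decompose $f=f_1+f_2$ via \autoref{thm:BDA-decomp}, handle $H_{f_1}$ through the $\bar\partial$-solution operator $A_\varphi$ (\autoref{lem:A_properties} and \autoref{lem:Hf_A}) together with the boundedness of $P$, and handle $H_{f_2}$ by a Fock--Carleson measure estimate (what the paper later cites as \cite[Theorem~2.6]{Lv2014Toeplitz}); for necessity, test on $k_z$ and use the near-diagonal lower bound \eqref{eq:K_lower_estimate} to make $P(fk_z)/k_z$ a legitimate holomorphic competitor on $B(z,r_0)$, yielding $G_{q,r_0}(f)(z)\lesssim\|H_f k_z\|_{q,\varphi}$. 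Indeed this very computation appears verbatim in the paper's proof of \autoref{TH:Hf_ess_Gq}, $(2)\Rightarrow(3)$. The technical worries you raise about $p\le 1$ (quasi-Banach structure, no duality for weak convergence of $k_z$) are genuine but are resolved in \cite{IDA} by working on the dense set $\Gamma$ and exploiting the off-diagonal kernel decay rather than weak-$*$ arguments; your outline is otherwise sound.
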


\section{Proofs of Theorems 1.1 and 1.2}

\begin{lemma}\label{le:f2_to_Hfess}
Let $1<p \leq q<\infty$, $r>0$, and $f \in \mathcal{S}$ . If
\[
\sup_{z \in \mathbb{C}^{n}} M_{q,r}(f)(z) < \infty,
\]
then
\[
\|H_{f}\|_{\mathrm{ess}} \lesssim \limsup_{|z| \rightarrow \infty} M_{q,r}(f)(z).
\]
\end{lemma}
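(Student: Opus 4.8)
The plan is to approximate $H_f$ by a compact Hankel operator obtained by spatially truncating the symbol, and then to control the untruncated remainder through the $\mathrm{BDA}^q$ norm estimate. First note that, since $M_{q,r}(f)$ is bounded, $G_{q,r}(f)\le M_{q,r}(f)\in L^\infty$, so $f\in\mathrm{BDA}^q$ and $H_f\colon F_\varphi^p\to L_\varphi^q$ is bounded by \autoref{thm:Hf_Bounded_Comp}; in particular $\|H_f\|_{\mathrm{ess}}$ is meaningful. For $R>0$ put $f_R=f\chi_{B(0,R)}$ and $g_R=f-f_R=f\chi_{\mathbb{C}^n\setminus B(0,R)}$; both lie in $\mathcal{S}$ because multiplication by a bounded function preserves integrability against $\Gamma$, and by linearity of the symbol map $H_f=H_{f_R}+H_{g_R}$ on $\Gamma$.

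The key observation I would use is that $f_R\in\mathrm{VDA}^q$. Indeed $G_{q,r}(f_R)(z)\le M_{q,r}(f_R)(z)\le\sup_{w} M_{q,r}(f)(w)<\infty$ for every $z$, and if $|z|<R-r$ then $B(z,r)\subset B(0,R)$, so $f_R\equiv f$ there, hmm — rather, note that what we truly need is the $\mathrm{VDA}$ decay, which comes from the complementary observation below; so let me instead record that $G_{q,r}(g_R)(z)=0$ once $B(z,r)\subset B(0,R)$, i.e.\ once $|z|<R-r$, while $G_{q,r}(f_R)$ shares the same boundedness and, for $|z|>R+r$, $f_R\equiv 0$ on $B(z,r)$, giving $G_{q,r}(f_R)(z)=0$. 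Hence $f_R\in\mathrm{VDA}^q$, and \autoref{thm:Hf_Bounded_Comp} tells us $H_{f_R}$ is compact. Therefore
\[
\|H_f\|_{\mathrm{ess}}\le\|H_f-H_{f_R}\|_{F_\varphi^p\to L_\varphi^q}=\|H_{g_R}\|_{F_\varphi^p\to L_\varphi^q}\lesssim\|g_R\|_{\mathrm{BDA}^q},
\]
where the last step is the norm equivalence in \autoref{thm:Hf_Bounded_Comp}, whose implied constant is independent of $R$.

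It then remains to estimate $\|g_R\|_{\mathrm{BDA}^q}=\sup_z G_{q,r}(g_R)(z)$. Taking the zero function as competitor gives $G_{q,r}(g_R)(z)\le M_{q,r}(g_R)(z)\le M_{q,r}(f)(z)$ for all $z$, and as noted $G_{q,r}(g_R)(z)=0$ whenever $|z|<R-r$, so
\[
\|g_R\|_{\mathrm{BDA}^q}\le\sup_{|z|\ge R-r}M_{q,r}(f)(z).
\]
Combining the two displays and letting $R\to\infty$ yields $\|H_f\|_{\mathrm{ess}}\lesssim\lim_{R\to\infty}\sup_{|z|\ge R-r}M_{q,r}(f)(z)=\limsup_{|z|\to\infty}M_{q,r}(f)(z)$, which is the claim.

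The only point that is not purely formal is the compactness of the truncated operator $H_{f_R}$, and I would dispose of it by the membership $f_R\in\mathrm{VDA}^q$ together with \autoref{thm:Hf_Bounded_Comp} as above. If one prefers a self-contained route, compactness of $H_{f_R}=(I-P)M_{f_R}$ follows from compactness of the multiplication operator $M_{f_R}\colon F_\varphi^p\to L_\varphi^q$: bounded sequences in $F_\varphi^p$ form normal families by the pointwise kernel estimates, so a subsequence $g_{k_j}$ converges uniformly on $\overline{B(0,R)}$, and then $f_R g_{k_j}\to f_R g$ in $L_\varphi^q$ because $\int_{B(0,R)}|f|^q\,dv<\infty$ — finite since $B(0,R)$ is covered by finitely many $r$-balls on each of which the integral of $|f|^q$ is controlled by $\sup M_{q,r}(f)^q$ — while $I-P$ is bounded on $L_\varphi^q$ for $1\le q<\infty$. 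Either way, the truncation plus the $\mathrm{BDA}^q$ estimate is the whole content of the argument; I expect no real obstacle beyond organizing these steps.
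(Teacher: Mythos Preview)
Your proof is correct and follows essentially the same strategy as the paper: truncate the symbol by a characteristic function of a large ball, use $\mathrm{VDA}^q$ membership of the compactly supported piece together with \autoref{thm:Hf_Bounded_Comp} to get compactness, and bound the tail. The only cosmetic difference is that you estimate $\|H_{g_R}\|$ via the norm equivalence $\|H_{g_R}\|\simeq\|g_R\|_{\mathrm{BDA}^q}$ in \autoref{thm:Hf_Bounded_Comp}, whereas the paper appeals directly to the Fock--Carleson measure inequality $\|H_{(1-\chi_t)f}g\|_{q,\varphi}\lesssim\|(1-\chi_t)fg\|_{q,\varphi}\lesssim\|g\|_{p,\varphi}\sup_z M_{q,r}((1-\chi_t)f)(z)$; these are equivalent packagings of the same estimate.
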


\begin{proof}

For any $t>0$, let $\chi_{t}$ be the characteristic function of the set $\{z \in \mathbb{C}^{n} : |z| \leq t\}$. Let
$
f = \chi_{t}f + (1-\chi_{t})f
$
be the decomposition of $f$, then
\[
H_{f} = H_{\chi_{t}f} + H_{(1-\chi_{t})f}.
\]
Since $\chi_{t}f$ has compact support, hence $\chi_{t}f \in \mathrm{VDA}^q$ (since $G_{q,1}(\cdot) \to 0$ as $|z| \to \infty$). By \autoref{thm:Hf_Bounded_Comp}, $T_{1,t}$ is compact. Then
\begin{equation}\label{eq:Hfess}
\|H_{f}\|_{\mathrm{ess}} \leq \|H_{(1-\chi_{t})f}\|.
\end{equation}
Let $d\nu(z) = |(1-\chi_{t})f(z)|^{q} dv(z)$. By Fock-Carleson measure theory \cite[Theroem 2.6]{Lv2014Toeplitz}, we get
\[
\|H_{(1-\chi_{t})f}g\|_{q,\varphi}\lesssim \left\|\left(1-\chi_{t}\right) f g\right\|_{q,\varphi}\lesssim \|g\|_{ p,\varphi} \cdot \sup_{z \in \mathbb{C}^{n}} \left(\frac{1}{|B(z,r)|} \int_{B(z,r)} |(1-\chi_{t})f|^{q} dv\right)^{1/q}.
\]
Thus:
\begin{equation}\label{eq:Hf(1-t)}
\|H_{(1-\chi_{t})f}\| \lesssim \sup_{z \in \mathbb{C}^{n}} M_{q,r}((1-\chi_{t})f)(z)
\end{equation}
Let $t \rightarrow \infty$. When $|z|$ is sufficiently large, $\chi_{t} \equiv 0$ on $B(z,r)$, hence:
\begin{equation}\label{eq:lim}
\limsup_{|z| \rightarrow \infty} M_{q,r}((1-\chi_{t})f)(z) = \limsup_{|z| \rightarrow \infty} M_{q,r}(f)(z)
\end{equation}
Combining  \eqref{eq:Hfess}, \eqref{eq:Hf(1-t)} and \eqref{eq:lim}, we obtain
\[
\|H_{f}\|_{\mathrm{ess}} \lesssim \limsup_{|z| \rightarrow \infty} M_{q,r}(f)(z).
\]
\end{proof}

\begin{lemma}\label{le:f1_to_Hfess}
Let $1 \leq p \leq q < \infty$ and $f \in \mathcal{S}$. If \[\sup_{z \in \mathbb{C}^{n}}|\bar{\partial} f(z)| < \infty\] (in the weak derivative sense), then
\[
\|H_{f}\|_{ess} \lesssim \limsup_{|z| \rightarrow \infty} |\bar{\partial} f(z)|.
\]
\end{lemma}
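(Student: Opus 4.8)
As in \autoref{le:f2_to_Hfess}, the idea is to peel off the behaviour of $\bar\partial f$ near infinity, where it is close to $L:=\limsup_{|z|\to\infty}|\bar\partial f(z)|$, leaving a compactly supported piece that contributes a compact operator. Throughout I will use the pointwise estimate $\|g\,e^{-\varphi}\|_{\infty}\lesssim\|g\|_{p,\varphi}$ for $g\in F^{p}_{\varphi}$ (from the reproducing formula, the kernel bound (1), and H\"older's inequality); it gives the Fock inclusion $F^{p}_{\varphi}\subset F^{q}_{\varphi}$ with $\|g\|_{q,\varphi}\lesssim\|g\|_{p,\varphi}$ for $1\le p\le q$ (via $\|g\|_{q,\varphi}^{q}\le\|g\,e^{-\varphi}\|_{\infty}^{q-p}\|g\|_{p,\varphi}^{p}$), and it shows that norm-bounded subsets of $F^{p}_{\varphi}$ are normal families.

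By \autoref{lem:Hf_A}, for each $g\in\Gamma$ one has $H_{f}g=(I-P)A_{\varphi}(g\,\bar\partial f)$. (\autoref{lem:Hf_A} is stated for $f\in C^{1}$; under our hypothesis it applies after a standard mollification of $f$, or directly once one notes that $A_{\varphi}$ solves $\bar\partial u=g\,\bar\partial f$ at least weakly, so that $fg-A_{\varphi}(g\,\bar\partial f)$ is entire and hence annihilated by $I-P$.) Since $\bar\partial f\in L^{\infty}_{0,1}$, combining this identity with \autoref{lem:A_properties}(i), the boundedness of $P$ on $L^{q}_{\varphi}$, and the Fock inclusion above shows $H_{f}\colon F^{p}_{\varphi}\to L^{q}_{\varphi}$ is bounded. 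For $R>0$ set $\omega_{R}:=\chi_{B(0,R)}\,\bar\partial f$ and $\eta_{R}:=\bar\partial f-\omega_{R}$, so that $\omega_{R}$ is bounded with support in $\overline{B(0,R)}$, while $\|\eta_{R}\|_{L^{\infty}_{0,1}}=\operatorname*{ess\,sup}_{|z|>R}|\bar\partial f(z)|=:\varepsilon_{R}$, with $\varepsilon_{R}\downarrow L$ as $R\to\infty$. Define $S_{R}g:=(I-P)A_{\varphi}(g\,\omega_{R})$ and $T_{R}g:=(I-P)A_{\varphi}(g\,\eta_{R})$ for $g\in\Gamma$; then $H_{f}=S_{R}+T_{R}$ on $\Gamma$, hence on $F^{p}_{\varphi}$ once both summands are known bounded.

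First I would show $S_{R}$ is compact: the multiplication map $g\mapsto g\,\omega_{R}$, from $F^{p}_{\varphi}$ into $(0,1)$-forms equipped with $\|\cdot\|_{q,\varphi}$, is bounded since $\omega_{R}$ is bounded and supported in the compact set $\overline{B(0,R)}$ on which $|g|\lesssim_{R}\|g\|_{p,\varphi}$, and it is compact because a norm-bounded sequence in $F^{p}_{\varphi}$ has a locally uniformly convergent subsequence, along which the products $g\,\omega_{R}$ converge in $L^{q}$ over $\overline{B(0,R)}$, hence in $\|\cdot\|_{q,\varphi}$; composing with the bounded operators $A_{\varphi}$ (\autoref{lem:A_properties}(i)) and $I-P$ yields compactness of $S_{R}\colon F^{p}_{\varphi}\to L^{q}_{\varphi}$. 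Next I would bound $\|T_{R}\|$: for $g\in F^{p}_{\varphi}$, using boundedness of $P$, then \autoref{lem:A_properties}(i) with exponent $q$, then the Fock inclusion,
\[
\|T_{R}g\|_{q,\varphi}\lesssim\|A_{\varphi}(g\,\eta_{R})\|_{q,\varphi}\lesssim\|g\,\eta_{R}\|_{q,\varphi}\le\varepsilon_{R}\,\|g\|_{q,\varphi}\lesssim\varepsilon_{R}\,\|g\|_{p,\varphi},
\]
so $\|T_{R}\|_{F^{p}_{\varphi}\to L^{q}_{\varphi}}\lesssim\varepsilon_{R}$ with a constant independent of $R$. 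Therefore $\|H_{f}\|_{\mathrm{ess}}\le\|H_{f}-S_{R}\|_{F^{p}_{\varphi}\to L^{q}_{\varphi}}=\|T_{R}\|\lesssim\varepsilon_{R}$ for every $R>0$, and letting $R\to\infty$ gives $\|H_{f}\|_{\mathrm{ess}}\lesssim L=\limsup_{|z|\to\infty}|\bar\partial f(z)|$.

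I expect the genuinely delicate points to be (i) the use of \autoref{lem:Hf_A} at the reduced regularity in force here — one must ensure $fg-A_{\varphi}(g\,\bar\partial f)$ is honestly entire and lies in a space on which $I-P$ vanishes, which is why the mollification or weak-solution remark is needed — and (ii) the verification that multiplication by the compactly supported form $\omega_{R}$ followed by $A_{\varphi}$ is a compact operator; both are settled by the normal-families and mollification considerations above, and the remaining estimates are routine.
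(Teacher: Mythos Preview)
Your proof is correct and follows essentially the same route as the paper: both split $\bar\partial f$ via the characteristic function of a large ball, use \autoref{lem:Hf_A} together with \autoref{lem:A_properties}(i) and the boundedness of $P$ to reduce to controlling $g\mapsto g\,\bar\partial f$, show the compactly supported piece yields a compact operator, and bound the tail piece by $\sup_{|z|>R}|\bar\partial f(z)|$. The only minor difference is that where the paper invokes $(p,q)$-Fock--Carleson measure theory for both the compactness and the tail estimate, you argue more directly via normal families and the Fock inclusion $F^{p}_{\varphi}\hookrightarrow F^{q}_{\varphi}$, which is a perfectly acceptable (and slightly more self-contained) alternative.
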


\begin{proof}
According to  \autoref{lem:Hf_A} in the paper, for $g \in \Gamma$ , we have
\[
H_{f}g = A_{\varphi}(g\bar{\partial}f) - P\left(A_{\varphi}(g\bar{\partial}f)\right),
\]
where $A_{\varphi}$ is an  integral operator (defined in \eqref{A_defenition}) satisfying $L^{p}$ boundedness.

For $t > 0$, let $\chi_{t}$ be the characteristic function of the ball $B(0,t)$. Decompose $H_{f}$ as:
\[
H_{f}g = T_{1,t}g + T_{2,t}g,
\]
where
\[
\begin{aligned}
T_{1,t}g &= A_{\varphi}\left(\chi_{t}g\bar{\partial}f\right) - P\left(A_{\varphi}\left(\chi_{t}g\bar{\partial}f\right)\right), \\
T_{2,t}g &= A_{\varphi}\left((1-\chi_{t})g\bar{\partial}f\right) - P\left(A_{\varphi}\left((1-\chi_{t})g\bar{\partial}f\right)\right).
\end{aligned}
\]
Let $d\mu_{t}(z) = |\chi_{t}(z)\bar{\partial}f(z)|^{q}dv(z)$. By $\sup_{z}|\bar{\partial}f| < \infty$ and the compact support of $\chi_{t}$, $\mu_{t}$ is a compactly supported measure. Therefore, by Proposition 2.3 of \cite{IDA}, $\mu_{t}$ is a vanishing $(p,q)$-Fock-Carleson measure.
For any sequence $\{g_{n}\} \subset  F^{p}_{\varphi}$ weakly converging to $0$, we have
\[
\|T_{1,t}g_{n}\|_{q,\varphi} \lesssim \|\chi_{t}g_{n}\bar{\partial}f\|_{q,\varphi} = \left(\int |g_{n}|^{q}e^{-q\varphi}d\mu_{t}\right)^{1/q} \rightarrow 0 \quad (n \rightarrow \infty),
\]
since the embedding $ F^{p}_{\varphi} \hookrightarrow L^{q}(\mu_{t})$ is compact. Therefore, $T_{1,t}$ is compact.

Let $M_{t} = \sup_{|z| \geq t}|\bar{\partial}f(z)|$. Define the measure $d\nu_{t}(z) = |(1-\chi_{t}(z))\bar{\partial}f(z)|^{q}dv(z)$.
By \autoref{lem:A_properties} and the boundedness of the Bergman projection $P$:
\[
\|T_{2,t}g\|_{q,\varphi} \lesssim \|(1-\chi_{t})g\bar{\partial}f\|_{q,\varphi} = \left(\int |g|^{q}e^{-q\varphi}d\nu_{t}\right)^{1/q}.
\]
By the characterization of $(p,q)$-Fock-Carleson measures  \cite[Theroem 2.6]{Lv2014Toeplitz}:
\[
\sup_{z \in \mathbb{C}^{n}}\left(\frac{1}{|B(z,1)|}\int_{B(z,1)} d\nu_{t}\right)^{1/q} \leq \sup_{z \in \mathbb{C}^{n}}\left(\frac{1}{|B(z,1)|}\int_{B(z,1)} M_{t}^{q}dv\right)^{1/q} = M_{t}.
\]
Thus,
\[
\|T_{2,t}\|_{ F^{p}_{\varphi} \rightarrow L^{q}_{\varphi}} \lesssim M_{t}.
\]
By the definition of essential norm:
\[
\|H_{f}\|_{ess} \leq \|T_{2,t}\| \lesssim M_{t} = \sup_{|z| \geq t} |\bar{\partial}f(z)|.
\]
Taking $t \rightarrow \infty$, we obtain
\[
\|H_{f}\|_{ess} \lesssim \limsup_{t \rightarrow \infty} |\bar{\partial}f(z)| = \limsup_{|z| \rightarrow \infty} |\bar{\partial}f(z)|.
\]
\end{proof}

\begin{proof}[Proof of \autoref{TH:Hf_ess_Gq}]
First we prove (1) $\Rightarrow$ (2).Note that
\[
\left\|H_{f}-K\right\|_{F_{\varphi}^{p} \rightarrow L_{\varphi}^{q}}
\geq\left\|\left(H_{f}-K\right)\left(k_{z}\right)\right\|_{q, \varphi}
\geq \left\|H_{f}\left(k_{z}\right)\right\|_{q, \varphi}-\left\|K\left(k_{z}\right)\right\|_{q, \varphi}
\]
For any compact operator $K$, we have $\left\|K\left(k_{z}\right)\right\|_{F^{q}(\varphi)} \rightarrow 0$ as $|z| \rightarrow \infty$, since $k_{z} \rightarrow 0$ weakly as $|z| \rightarrow \infty$.Then
\[
 \left\|H_{f}\right\|_{ess} \geq \left\|H_{f}-K\right\|_{F_{\varphi}^{p} \rightarrow L_{\varphi}^{q}} \geq  \limsup _{|z| \rightarrow \infty} \left\|H_{f}\left(k_{z}\right)\right\|_{q, \varphi }
\]
Now we prove(2) $\Rightarrow$ (3), Note that for  $0< r_{0}$, \eqref{eq:K_lower_estimate} provides
\begin{equation*}
\begin{aligned}
\left\|H_{f}\left(k_{z}\right)\right\|_{q, \varphi}^{q}
&\geq \int_{B\left(z, r_{0}\right)}\left|f k_{z}-P\left(f k_{z}\right)\right|^{q} e^{-q \varphi} d v \\
&\geq C \frac{1}{\left|B\left(z, r_{0}\right)\right|} \int_{B\left(z, r_{0}\right)}\left|f-\frac{1}{k_{z}} P\left(f k_{z}\right)\right|^{q} d v \\
&\geq C G_{q, r_{0}}^{q}(f)(z)
\end{aligned}
\end{equation*}
Thus
\begin{align*}
\left\|\left(H_{f}-K\right)\left(k_{z}\right)\right\|_{q, \varphi}
&\geq \left\|H_{f}\left(k_{z}\right)\right\|_{q, \varphi}-\left\|K\left(k_{z}\right)\right\|_{q, \varphi} \\
&\geq C G_{q, 1}(f)(z)-\left\|K\left(k_{z}\right)\right\|_{q, \varphi}
\end{align*}
Taking the superior limit on both sides yields
\[\displaystyle\limsup_{|z| \rightarrow \infty}\left\|H_{f}\left(k_{z}\right)\right\|_{q, \varphi}\geq C \displaystyle\limsup _{|z| \rightarrow \infty}G_{q, r}(f)(z).\]
(3) $\Rightarrow $(4). By Lemma 3.6 in \cite{IDA}, we have
\begin{equation}\label{eq:f1_estimate_by_G}
f_{1} \in C^{2}(\mathbb{C}^{n}), \quad \left|\bar{\partial} f_{1}(z)\right| \lesssim G_{q, r}(f)(z), \quad z \in \mathbb{C}^n
\end{equation}
and
\begin{equation}\label{eq:f2_estimate_by_G}
M_{q, r}\left(f_{2}\right)(z) \lesssim G_{q, r}(f)(z), \quad z \in \mathbb{C}^n.
\end{equation}
Hence
\[
\limsup _{|z| \rightarrow \infty} \left|\bar{\partial} f_{1}(z)\right|+\limsup _{|z| \rightarrow \infty} M_{q, r}\left(f_{2}\right)(z) \leq \limsup _{|z| \rightarrow \infty} G_{q, r}(f)(z).
\]
(4) $\Rightarrow$ (1).
Let $f = f_1 + f_2$ with $f_1 \in C^2(\mathbb{C}^{n})$. Since $H_f$ is bounded from $F_{\varphi}^{p}$ to $L_{\varphi}^{q}$, by \eqref{eq:f1_estimate_by_G}, \eqref{eq:f2_estimate_by_G} and \autoref{thm:Hf_Bounded_Comp}, we have:
\begin{equation*}
\sup_{z \in \mathbb{C}^{n}} \left|\bar{\partial} f_1(z)\right| \lesssim \sup_{z \in \mathbb{C}^{n}} G_{q,r}(f)(z) < \infty
\end{equation*}
and
\begin{equation*}
\sup_{z \in \mathbb{C}^{n}} M_{q,r}(f_2)(z) \lesssim \sup_{z \in \mathbb{C}^{n}} G_{q,r}(f)(z) < \infty
\end{equation*}
Thus it suffices to prove the following two estimates for any decomposition $f = f_1 + f_2$:
\begin{equation}\label{eq:Hf1ess_upper_estimate_f1}
\left\|H_{f_1}\right\|_{ess} \lesssim \limsup_{|z| \rightarrow \infty} \left|\bar{\partial} f_1(z)\right|
\end{equation}
and
\begin{equation}\label{eq:Hf2ess_upper_estimate_MO_f2}
\left\|H_{f_2}\right\|_{ess} \lesssim \limsup_{|z| \rightarrow \infty} M_{q,r}(f_2)(z)
\end{equation}
By the proof of Theorem 1.1 in \cite{IDA}, $f_2 \in \mathcal{S}$. Since $f \in \mathcal{S}$, it follows that $f_1 \in \mathcal{S}$. Applying \autoref{le:f2_to_Hfess} and \autoref{le:f1_to_Hfess} , we conclude that \eqref{eq:Hf1ess_upper_estimate_f1} and \eqref{eq:Hf2ess_upper_estimate_MO_f2} hold.

Now for any decomposition $f = f_1 + f_2$, we have:
\[
\|H_f\|_{ess} \leq \|H_{f_1}\|_{ess} + \|H_{f_2}\|_{ess} \lesssim \left( \limsup_{|z| \to \infty}|\overline{\partial} f_1(z)| + \limsup_{|z| \to \infty} M_{q,r}(f_2)(z) \right).
\]
Since this holds for \textit{every} decomposition $f = f_1 + f_2$, we can take the infimum over all such decompositions:
\[
\|H_f\|_{ess} \lesssim \inf_{f=f_1+f_2} \left( \limsup_{|z| \to \infty}|\overline{\partial} f_1(z)| + \limsup_{|z| \to \infty} M_{q,r}(f_2)(z) \right).
\]
This completes the proof .

\end{proof}

\begin{proof}[Proof of \autoref{TH:Hf_ess_H}]
Let $\sigma_t \in C_0^\infty(\mathbb{C}^n)$ be a characteristic function satisfying:
\begin{itemize}
\item[(1)] $\sigma_t(z) = 1$ for $|z| \leq t$,
\item[(2)] $\sigma_t(z) = 0$ for $|z| \geq t+1$,
\item[(3)] $|\nabla \sigma_t(z)| \leq C$ uniformly in $t$.
\end{itemize}
We define the function $\psi_t$ by
\[
\psi_t(z) = A_\varphi(\sigma_t \bar{\partial} f_1)(z),
\]
where $A_\varphi$ is the  operator defined in \eqref{A_defenition}.

 From Proposition 10 of \cite{Berndtsson1982}, we have
\[
\bar{\partial} \psi_t = \sigma_t \bar{\partial} f_1.
\]
This implies:
\begin{itemize}
\item[(1)] For $|z| > t+1$, $\psi_t$ is holomorphic since $\sigma_t \equiv 0$,
\item[(2)]  For $|z| \leq t+1$, $\psi_t \in C^\infty(\mathbb{C}^n)$ because $\sigma_t \bar{\partial} f_1$ is smooth with compact support and $A_\varphi$ preserves smoothness.
\end{itemize}

Define the symbol
\[
h_t = \psi_t + \sigma_t f_2
\]
and consider the Hankel operator $K_t = H_{h_t}$. We verify its compactness.

For $H_{\sigma_t f_2}$, since $\sigma_t f_2$ has compact support, $H_{\sigma_t f_2}$ is compact. For $H_{\psi_t}$, when $|z| > t+1$, $H_{\psi_t} = 0$ because $\psi_t$ is holomorphic. When $|z| \leq t+1$, $\psi_t$ is smooth and bounded. Therefore, there is a constant \( t_{0} \) such that for all \( t<t_{0} \leq|z|<\infty \),
\[
G_{q, r}\left(\psi_{t}\right)(z)=0
\]
which implies that
\[
\lim _{|z| \rightarrow \infty} G_{q, r}\left(\psi_{t}\right)(z)=0
\]
By this and applying \autoref{thm:Hf_Bounded_Comp}, we see that \( H_{\psi_{t}} \) is compact from \( F_{\varphi}^{p} \) to \( L_{\varphi}^{q} \).

Thus $K_t = H_{\psi_t} + H_{\sigma_t f_2}$ is compact as the sum of compact operators.

 We decompose the difference:
\[
H_f - K_t = (H_{f_1} - H_{\psi_t}) + (H_{f_2} - H_{\sigma_t f_2}).
\]
Applying Fock-Carleson measure theory \cite[Theroem 2.6]{Lv2014Toeplitz}, we obtain:
\begin{equation*}
\|H_{(1-\sigma_t)f_2}\|_{F_\varphi^p \to L_\varphi^q} \lesssim \sup_{z \in \mathbb{C}^n} M_{q,r}((1-\sigma_t)f_2)(z) \leq \sup_{|z| > t} M_{q,r}(f_2)(z).
\end{equation*}
From \autoref{lem:A_properties}, we have the identity:
\begin{equation*}
\bar{\partial}(f_1 - \psi_t) = \bar{\partial} f_1-\sigma_{t} \bar{\partial} f_{1}=(1-\sigma_t)\bar{\partial} f_1 .
\end{equation*}
For arbitrary $g \in F_\varphi^p$, the operator acts as:
\begin{equation*}
H_{f_1 - \psi_t} g = A_\varphi(g(1-\sigma_t)\bar{\partial} f_1) - P(A_\varphi(g(1-\sigma_t)\bar{\partial} f_1)).
\end{equation*}
Using \autoref{lem:A_properties} along with the boundedness property of $P$, we establish:
\begin{equation*}
\|H_{f_1 - \psi_t}\|_{F_\varphi^p \to L_\varphi^q} \lesssim \sup_{z \in \mathbb{C}^n} |(1-\sigma_t)(z)\bar{\partial} f_1(z)| \leq \sup_{|z| > t} |\bar{\partial} f_1(z)|.
\end{equation*}
Then
\[
\|H_{f} - K_t\| \lesssim \sup_{|z| > t} |\bar{\partial} f_1(z)| + \sup_{|z| > t} M_{q,r}(f_2)(z).
\]
By taking $t \to \infty$ and the infimum over  decompositions of $f$, \autoref{TH:Hf_ess_Gq} implies that
\[
\inf_{t>0} \|H_f - K_t\| \lesssim \inf_{f=f_1+f_2} \left(\limsup_{|z| \to \infty} |\bar{\partial} f_1(z)| + \limsup_{|z| \to \infty} M_{q,r}(f_2)(z)\right) \lesssim \|H_f\|_{\text{ess}}.
\]
Since each $K_t$ is a compact Hankel operator, we have
\[
\inf_H \|H_f - H\| \leq \inf_t \|H_f - K_t\| \lesssim \|H_f\|_{\text{ess}},
\]
where the infimum is taken over all compact Hankel operators $H$. Combining this with the trivial inequality $\|H_f\|_{\text{ess}} \leq \inf_H \|H_f - H\|$ completes the proof of (1).

Now we prove (2). Since $H_{\psi}$ is compact if and only if $\psi \in V D A^{q}$, using (1) and \ref{thm:Hf_Bounded_Comp} we have
\begin{align*}
\left\|H_{f}\right\|_{\text{ess }} & \simeq \inf \left\{\left\|H_{f}-H\right\|: H \text{ compact Hankel operator }\right\} \\
& = \inf _{\psi \in V D A^q}\left\{\left\|H_{f}-H_{\psi}\right\|\right\} \\
& = \inf _{\psi \in V D A^q}\left\{\left\|H_{f-\psi}\right\|\right\} \\
& \simeq \inf _{\psi \in V D A^q}\|f-\psi\|_{B D A^q} . \\
\end{align*}
\end{proof}

\section{Proof of Theorem 1.3}

\begin{theorem}\cite[Theorem 1.9]{AHLT}\label{thm:toeplitzSh}
Suppose $h: \mathbb{R}^{+} \rightarrow \mathbb{R}^{+}$ is a continuous increasing convex function, $\mu$ is a positive Borel measure such that Toeplitz operator $T_{\mu}: F_{\varphi}^{2} \rightarrow F_{\varphi}^{2}$ is compact. Then $T_{\mu} \in S_{h}$ if and only if there is a constant $C>0$ such that
\[
\int_{\mathbb{C}} h(C \widetilde{\mu}(z)) d v(z)<\infty
\]
where
$
\widetilde{\mu}(z):=\int_{\mathbb{C}}\left|k_{z}(w)\right|^{2} e^{-2 \varphi(w)} d \mu(w).
$
\end{theorem}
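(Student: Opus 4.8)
The plan is to route everything through the Berezin transform of $T_\mu$ and the trace formula, treating the two implications separately. First I would record two identities. Since $T_\mu\ge0$, its Berezin transform is exactly the averaging function, $\langle T_\mu k_z,k_z\rangle=\widetilde\mu(z)$, where $\langle\cdot,\cdot\rangle$ is the inner product of $F_\varphi^2$. Second, every positive operator $S$ on $F_\varphi^2$ satisfies $\operatorname{tr}(S)=\int_{\mathbb{C}^n}\langle Sk_z,k_z\rangle K(z,z)e^{-2\varphi(z)}\,dv(z)$, and since kernel property (3) gives $K(z,z)e^{-2\varphi(z)}\simeq1$, we obtain $\operatorname{tr}(S)\simeq\int_{\mathbb{C}^n}\widetilde S(z)\,dv(z)$. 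Throughout I use that, $T_\mu$ being positive and compact, $T_\mu\in S_h$ is equivalent to $\operatorname{tr}(h(T_\mu))=\sum_n h(\lambda_n)<\infty$, with $\{\lambda_n\}$ the eigenvalues of $T_\mu$ and $h(T_\mu)$ defined by the spectral calculus; I may assume $h(0)=0$, since otherwise both conditions fail trivially on the infinite-measure space $\mathbb{C}^n$.

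For necessity, fix $z$ and let $\rho_z$ be the spectral probability measure of $T_\mu$ in the unit vector $k_z$, so that $\widetilde\mu(z)=\int\lambda\,d\rho_z(\lambda)$. Scalar Jensen applied to the convex $h$ gives $h(\widetilde\mu(z))\le\int h(\lambda)\,d\rho_z(\lambda)=\langle h(T_\mu)k_z,k_z\rangle=\widetilde{h(T_\mu)}(z)$. Integrating over $\mathbb{C}^n$ and using the trace formula for the positive operator $h(T_\mu)$ yields $\int_{\mathbb{C}^n}h(\widetilde\mu(z))\,dv(z)\le\int_{\mathbb{C}^n}\widetilde{h(T_\mu)}(z)\,dv(z)\simeq\operatorname{tr}(h(T_\mu))<\infty$, which is the stated condition (the constant $C$ absorbs the comparison constant).

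Sufficiency is the substantial direction, and I would discretize. Fix an $r$-lattice $\{a_j\}$ and its splitting into $L:=K^{2n}$ separated sublattices $\Lambda_1,\dots,\Lambda_L$ as in \eqref{eq:lambda_k-def}, taking $K$ large enough that within each $\Lambda_k$ the balls $\{B(a,r):a\in\Lambda_k\}$ are pairwise disjoint and far apart; write $\widehat\mu_j=\mu(B(a_j,r))$ and $\nu_k=\mu|_{\bigcup_{a_j\in\Lambda_k}B(a_j,r)}$, an honest restriction of $\mu$ by disjointness. Step (i): the lower bound \eqref{eq:K_lower_estimate} gives $\widetilde\mu(z)\gtrsim\mu(B(z,r_0))$, and a covering comparison upgrades this to $\widehat\mu_j\lesssim\widetilde\mu(z)$ on a fixed portion of $B(a_j,r)$; monotonicity of $h$, bounded overlap, and fixed volumes then give $\sum_j h(c\,\widehat\mu_j)\lesssim\int_{\mathbb{C}^n}h(C\widetilde\mu)\,dv<\infty$. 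Step (ii): since the balls cover $\mathbb{C}^n$ we have $\mu\le\sum_k\nu_k$, hence $T_\mu\le\sum_{k=1}^L T_{\nu_k}$, and eigenvalue monotonicity together with the Weyl--Ky Fan inequalities (here $L$ is a fixed constant) give $\operatorname{tr}(h(cT_\mu))\lesssim\sum_{k=1}^L\operatorname{tr}(h(c'T_{\nu_k}))$. Step (iii): $T_{\nu_k}$ is a Toeplitz operator whose mass sits on the well-separated disjoint balls $\{B(a_j,r):a_j\in\Lambda_k\}$, so the pieces $T_{\mu_j}$ are almost orthogonal via the off-diagonal decay in kernel property (1), yielding $\operatorname{tr}(h(c'T_{\nu_k}))\lesssim\sum_{a_j\in\Lambda_k}\operatorname{tr}(h(c''T_{\mu_j}))$; since $h(c''\,\cdot)$ is convex with value $0$ at $0$, superadditivity gives $\operatorname{tr}(h(c''T_{\mu_j}))=\sum_n h(c''\lambda_n(T_{\mu_j}))\le h(c''\operatorname{tr}(T_{\mu_j}))\simeq h(c''\widehat\mu_j)$, using $\operatorname{tr}(T_{\mu_j})\simeq\widehat\mu_j$ from the trace formula. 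Chaining (iii), (ii) and (i) gives $\operatorname{tr}(h(cT_\mu))\lesssim\sum_j h(c''\widehat\mu_j)\lesssim\int_{\mathbb{C}^n}h(C\widetilde\mu)\,dv<\infty$, that is, $T_\mu\in S_h$.

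The hard part will be the almost-orthogonality estimate in step (iii): turning the geometric separation of the balls within a fixed sublattice into the additive bound $\operatorname{tr}(h(c'T_{\nu_k}))\lesssim\sum_{a_j\in\Lambda_k}\operatorname{tr}(h(c''T_{\mu_j}))$. Everything else is a routine combination of Jensen's inequality, the trace formula, eigenvalue monotonicity, and the covering geometry of the lattice; but this step requires quantitatively controlling the off-diagonal interactions $\langle T_{\mu_j}f,T_{\mu_l}f\rangle$ for $j\ne l$ through the decay $|K(z,w)|\lesssim e^{\varphi(z)+\varphi(w)-\theta|z-w|}$ and absorbing them into the diagonal, and it is exactly here that the separation built into the sublattices $\Lambda_k$ is indispensable (a single overlapping sum over all $j$ would instead collide with the superadditivity of the convex $h$ and fail).
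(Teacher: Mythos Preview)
The paper does not prove this statement: Theorem~\ref{thm:toeplitzSh} is quoted verbatim from \cite[Theorem~1.9]{AHLT} and used as a black box in the proof of Theorem~\ref{thm:Hf=Sh}, with no argument supplied. There is therefore nothing in the present paper to compare your proposal against.

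For what it is worth, your outline is the standard route for results of this type. The necessity direction via the spectral Jensen inequality $h(\langle T_\mu k_z,k_z\rangle)\le\langle h(T_\mu)k_z,k_z\rangle$ and the trace formula is correct and complete as written. The sufficiency direction is correctly identified as the hard half; the scheme of passing to separated sublattices, using operator monotonicity to bound $T_\mu$ by a finite sum of $T_{\nu_k}$, and then invoking superadditivity of a convex $h$ with $h(0)=0$ on each localized piece is sound in spirit. Your acknowledged gap---turning the kernel decay \(|K(z,w)|\lesssim e^{\varphi(z)+\varphi(w)-\theta|z-w|}\) into the additive estimate \(\operatorname{tr}(h(c'T_{\nu_k}))\lesssim\sum_{a_j\in\Lambda_k}\operatorname{tr}(h(c''T_{\mu_j}))\)---is genuine and is exactly where the work lies in \cite{AHLT}; without that step the argument is only a plan. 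If you intend to fill it in, you will need either a Schur-type bound on the off-diagonal blocks or an explicit comparison of the singular values of $T_{\nu_k}$ with those of the block-diagonal operator $\bigoplus T_{\mu_j}$, and the convexity of $h$ alone will not absorb arbitrary multiplicative constants inside the trace, so the constants must be tracked carefully.
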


\begin{proof}[Proof of \autoref{thm:Hf=Sh}]
 Let $\Lambda$ be an $r$-lattice as in \eqref{eq:lambda-def}, decompose $\Lambda = \cup_{k} \Lambda_{k}$ as in \eqref{eq:lambda_k-def}, and let $\{e_a : a \in \Lambda_k\}$ denote an orthonormal basis of $F^2_{\varphi}$. We define two linear operators:

\begin{equation*}\label{eq:T_B_def}
\begin{aligned}
T &= \sum_{a \in \Lambda} k_a \otimes e_a: L^2_{\varphi} \rightarrow F^2_{\varphi},\\
B &= \sum_{a \in \Lambda} g_a \otimes e_a: L^2_{\varphi} \rightarrow L^2_{\varphi},
\end{aligned}
\end{equation*}
where the functions $g_a$ are given by:

\[
g_a =
\begin{cases}
\frac{\chi_{B(a,r)} H_f(k_a)}{\|\chi_{B(a,r)} H_f(k_a)\|} & \text{if } \|\chi_{B(a,r)} H_f(k_a)\| \neq 0 ,\\
0 & \text{if } \|\chi_{B(a,r)} H_f(k_a)\| = 0.
\end{cases}
\]
The operator $B$ satisfies the following properties:
\begin{itemize}
\item [(a)]$\|g_a\| \leq 1$ for all $a \in \Lambda$,
\item [(b)]$\langle g_a, g_\tau \rangle = 0$ when $a \neq \tau$.
\end{itemize}
As a consequence, we obtain the norm estimate:
\[
\|B\|_{L^2_{\varphi} \rightarrow L^2_{\varphi}} \leq 1.
\]
On the other hand ,by the proof of Theorem 1.1 in \cite{Schatten},
\[\|T\|_{L^{2}_{\varphi} \rightarrow F^{2}_{\varphi}} \leq C.\]
Since
\begin{align*}
\langle B^* M_{\chi_{B(a,r)}} H_f T e_a, e_a \rangle &= \langle \chi_{B(a,r)} H_f T(e_a), B(e_a) \rangle
= \|\chi_{B(a,r)} H_f(k_a)\|
\end{align*}
and
\[
s_{j}\left(B^{*} H_{f} T\right) \leq\left\|B^{*}\right\| s_{j}\left(H_{f}\right)\|T\|\leq Cs_{j}\left(H_{f}\right),
\]
then

\begin{align*}
\sum_{a \in \Lambda_k} h\left(\left[C \int\limits_{B(a, r)}\left|H_{f}\left(k_{a}\right)\right|^{2} e^{-2 \varphi} d v\right]^{\frac{1}{2}}\right)
&= \sum_{a \in \Lambda_k} h\left(C\left\| \chi_{B(a, r)}H_{f} k_{a}\right\| \right) \\
&= \sum_{a \in \Lambda_k} h\left(C\left|\left\langle B^{*} H_{f} T e_a, e_a\right\rangle\right|\right) \\
&\leq \sum_{a \in \Lambda_k} h\left(C s_a\left(B^{*} H_{f} T\right)\right) \\
&\leq \sum_{a \in \Lambda_k} h\left(C s_a\left(H_{f}\right)\right) .
\end{align*}
Combining this with
\[\left( \int\limits_{B(a, r)}\left|H_{f}\left(k_{a}\right)\right|^{2} e^{-2 \varphi} d v\right)^{\frac{1}{2}}= \left(\int_{B(a, r)}\left|f-\frac{1}{k_{a}} P\left(f k_{a}\right)\right|^{2} d v\right)^{\frac{1}{2}}  \geq C G_{2,r}(f)(z)\]
then\[\sum_{a \in \Lambda_{k}} h(CG_{2,r}(f)(a)^{p})\leq \sum_{a \in \Lambda_{k}} h\left(C s_{k}\left(H_{f}\right)\right).\]

Now take $\Lambda$ to be an $\frac{r}{2}$-lattice similar to \eqref{eq:lambda-def}, which can be viewed as a union of $4^{n}  r$-lattice. Then

\begin{align*}
\int_{\mathbb{C}^{n}}h\left(C G_{2,\frac{r}{2}}(f)\right) d v & \leq \sum_{a \in \Lambda} \int_{B\left(a, \frac{r}{2}\right)} h\left(C G_{2,\frac{r}{2}}(f)\right) d v \\
& \leq C \sum_{a \in \Lambda} h\left(CG_{2,\frac{r}{2}}(f)(a)\right) \\
& \leq C \sum_{a \in \Lambda} h\left(CG_{2,r}(f)(a)\right) \leq C\sum_{k=1}^{\infty} h\left(C s_{k}\left(H_{f}\right)\right),
\end{align*}
and so, for $0<r \leq r_{0}$, we have
\[
\int_{\mathbb{C}^{n}}h\left(C G_{2,\frac{r}{2}}(f)\right) d v \leq C\sum_{k=1}^{\infty} h\left(C s_{k}\left(H_{f}\right)\right).
\]
Therefore, by Theorem 3.8 in \cite{IDA}, for each $r>0$, it holds that

\[
\int_{\mathbb{C}^{n}} h\left(CG_{2,r}(f)(a)\right) d v \leq C\sum_{k=1}^{\infty} h\left(C s_{k}\left(H_{f}\right)\right).
\]

(2) $\Rightarrow$ (1) Suppose $\int_{\mathbb{C}^n} h\left(C G_{2,r}(f)(z)\right) d v(z)<\infty$, $f$ admits a decomposition $f=f_{1}+f_{2}$ as in \eqref{eq:function_decop}. Using Lemma 3.6 in \cite{IDA}, we immediately obtain
\[
f_{1} \in C^{2}(\mathbb{C}^n), \quad \left|\bar{\partial} f_{1}(z)\right| \lesssim G_{2,r}(f)(z), \quad
 M_{2,r}\left(\bar{\partial} f_{1}\right)(z) \lesssim G_{2,r}(f)(z),\quad z \in\mathbb{C}^n.
\]
and
\[M_{2,r}\left(f_{2}\right)(z) \lesssim G_{2,r}(f)(z), \quad  \quad z \in \mathbb{C}^n. \]
 Hence
\begin{equation}\label{Sh1}
\int_{\mathbb{C}^n} h\left(M_{2,r}\left(\bar{\partial} f_{1}\right)\right) d v(z) \leq \int_{\mathbb{C}^n} h\left(C G_{2,r}(f)\right) d v(z)<\infty
\end{equation}
and
\begin{equation}\label{Sh2}
\int_{\mathbb{C}^n} h\left(M_{2,r}\left(f_{2}\right)\right) d v(z) \leq \int_{\mathbb{C}^n} h\left(C G_{2,r}(f)\right) d v(z)<\infty.
\end{equation}
Let $g$ be $\bar{\partial} f_{1}$ or $f_{2}$, and $M_{g}$ be the multiplication operator. We get $M_{2,r}(g)$ is bounded when $M_{2,r}(f)$ is bounded. So, $M_{g}$ is bounded from $F^{2}_{\varphi}$ to $L^{2}_{\varphi}$. Notice that
\[
\left\langle M_{g}^{*} M_{g} l, m\right\rangle=\left\langle M_{g} l, M_{g} m\right\rangle=\int_{\mathbb{C}^n} l \bar{m}|g|^{2} d v=\left\langle T_{|g|^{2}} l, m\right\rangle, \quad l, m \in F^{2}_{\varphi}.
\]
Hence \( M_{g}^{*} M_{g}=T_{|g|^{2}} \). Since \( M_{g} \in S_{h} \) if and only if \( T_{|g|^{2}} \in S_{h(\sqrt{(\cdot)})} \). By \autoref{thm:toeplitzSh} and the convexity of \( h(\sqrt{(\cdot)})\), \( T_{|g|^{2}} \in S_{h(\sqrt{(\cdot)})} \) if and only if
\[
\int_{\mathbb{C}^n} h\left(C\left(\widetilde{|g|^{2}}(z)^{\frac{1}{2}}\right)\right) d v(z)<\infty.
\]
Let \( \mu \) be a positive Borel measure, it is easy to check that \( \widehat{\mu}_{r}(z) \leq \widetilde{\mu}(z) \), and we claim that
\[
\int_{\mathbb{C}^n} h(\widetilde{\mu}(z)) d v(z) \leq \int_{\mathbb{C}^n} h\left(C \widehat{\mu}_{r}(z)\right) d v(z).
\]
By Jensen's inequality, the convexity of \( h \) and \( \widetilde{\mu}(z) \lesssim \widetilde{\widehat{\mu_{r}}}(z) \), we obtain
\[
\begin{aligned}
\int_{\mathbb{C}^n} h(\widetilde{\mu}(z)) d v(z) & \leq \int_{\mathbb{C}^n} h\left(\widetilde{\widehat{\mu}_{r}}(z)\right) d v(z) \\
& \leq \int_{\mathbb{C}^n} h\left(C \int_{\mathbb{C}^n}\left|k_{z,2}(w) e^{-\varphi(w)}\right|^{2} \widehat{\mu}_{r}(w) d A(w)\right) d v(z) \\
& \leq \int_{\mathbb{C}^n} \int_{\mathbb{C}^n}\left|k_{z,2}(w) e^{-\varphi(w)}\right|^{2} h\left(C \widehat{\mu}_{r}(w)\right) d A(w) d v(z) \\
& \lesssim \int_{\mathbb{C}^n} h\left(C \widehat{\mu}_{r}(z)\right) d v(z)
\end{aligned}
\]
It follows that
\[
\begin{aligned}
\int_{\mathbb{C}^n} h\left(C\left(\widetilde{|g|_{r}^{2}}(z)^{\frac{1}{2}}\right)\right) d v(z) & \leq \int_{\mathbb{C}^n} h\left(C \widehat{|g|_{r}^{2}}(z)^{\frac{1}{2}}\right) d v(z) \\
& =\int_{\mathbb{C}^n} h\left(C M_{2,r}(g)(z)\right) d v(z)\\
&\leq \int_{\mathbb{C}^n} h\left(C G_{2,r}(f)(z)\right) d v(z)<\infty
\end{aligned}\]
where the last inequality comes from \eqref{Sh1} and \eqref{Sh2}. Hence \( M_{g} \in S_{h} \). It is easy to get that \( \left\|H_{f_{1}} g\right\|_{L^{2}} \lesssim \left\|g \bar{\partial} f_{1}\right\|_{L^{2}} \) and \( \left\|H_{f_{2}} g\right\|_{L^{2}} \lesssim\left\|g f_{2}\right\|_{L^{2}} \) by using the proof process of Lemma 4.2 in \cite{Schatten}. Therefore, we get \( H_{f_{1}}, H_{f_{2}} \in S_{h} \). So, \( H_{f} \in S_{h} \). We complete the proof.

\end{proof}

\begin{corollary}
Let $1<p<\infty$ and suppose $f \in \mathcal{S}$ with $G_{2, r}(f)$ is bounded.Then the following statements are equivalent:
\begin{enumerate}
\item[(1)] $H_{f}: F^{2}_{\varphi} \rightarrow L^{2}_{\varphi}$ is in $S_{p}$.
\item[(2)] $f \in \mathrm{IDA}^{p}$.
\end{enumerate}
Furthermore,
\[
\left\|H_{f}\right\|_{S_{p}} \simeq \|f\|_{\mathrm{IDA}^{p}}.
\]
\end{corollary}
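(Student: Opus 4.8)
The plan is to derive the corollary from \autoref{thm:Hf=Sh} by specializing the Orlicz function to $h(t) = t^p$. First I would verify the hypotheses: for $1 < p < \infty$ the function $t \mapsto h(\sqrt{t}) = t^{p/2}$ is continuous, increasing, and convex on $\mathbb{R}^+$ (convexity uses $p/2 \geq 1/2$; in fact we need $p/2 \geq 1$, i.e. $p \geq 2$, for convexity of $t^{p/2}$ — so for $1<p<2$ one must instead note that $h(\sqrt{\cdot})$ being convex is exactly the condition and handle it, or appeal to the fact that the relevant monotonicity/interpolation arguments in \autoref{thm:Hf=Sh} still go through; I would state the corollary under the understanding that the cited Schatten theory of \cite{Schatten} already covers the full range $0<p<\infty$ and that $h(t)=t^p$ fits the framework after the standard reduction). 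Granting this, \autoref{thm:Hf=Sh} gives: $H_f \in S_p$ if and only if $H_f \in S_h$ if and only if there is $C>0$ with
\[
\int_{\mathbb{C}^n} \bigl(C\, G_{2,r}(f)(z)\bigr)^{p}\, dv(z) < \infty,
\]
which, after absorbing the constant, says exactly $\|G_{2,r}(f)\|_{L^p} < \infty$, i.e. $f \in \mathrm{IDA}^{p} = \mathrm{IDA}^{p,2}$ in the notation of the paper. This establishes the equivalence (1) $\Leftrightarrow$ (2).

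For the norm equivalence $\|H_f\|_{S_p} \simeq \|f\|_{\mathrm{IDA}^p}$, I would track the constants through the two implications of the proof of \autoref{thm:Hf=Sh}. In the direction (1) $\Rightarrow$ (2), the chain of inequalities culminating in $\int_{\mathbb{C}^n} h(C G_{2,r}(f)) \, dv \lesssim \sum_k h(C s_k(H_f))$ becomes, for $h(t) = t^p$, the estimate $\|G_{2,r}(f)\|_{L^p}^p \lesssim \sum_k s_k(H_f)^p = \|H_f\|_{S_p}^p$, using the positive homogeneity of $t^p$ to pull constants out of the sum; hence $\|f\|_{\mathrm{IDA}^p} \lesssim \|H_f\|_{S_p}$. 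In the direction (2) $\Rightarrow$ (1), the argument produces $M_{\bar\partial f_1}, M_{f_2} \in S_h$ via \autoref{thm:toeplitzSh}, and the final display chain gives $\int h(C M_{2,r}(g)) \, dv \lesssim \int h(C G_{2,r}(f))\, dv$; specializing to $h(t)=t^p$ and using $M_g^* M_g = T_{|g|^2}$ together with $s_j(M_g)^2 = s_j(T_{|g|^2})$, one gets $\|M_g\|_{S_p} \lesssim \|G_{2,r}(f)\|_{L^p}$ for $g \in \{\bar\partial f_1, f_2\}$, and then $\|H_{f_i}\|_{S_p} \lesssim \|M_g\|_{S_p}$ from the pointwise operator domination $\|H_{f_1}g\|_{L^2} \lesssim \|g\bar\partial f_1\|_{L^2}$ and $\|H_{f_2}g\|_{L^2}\lesssim \|gf_2\|_{L^2}$ (which, combined with the ideal property of $S_p$, gives Schatten-norm domination). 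Summing, $\|H_f\|_{S_p} \leq \|H_{f_1}\|_{S_p} + \|H_{f_2}\|_{S_p} \lesssim \|f\|_{\mathrm{IDA}^p}$.

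The main obstacle I anticipate is the convexity issue for $1 < p < 2$: $t \mapsto t^{p/2}$ is \emph{concave} there, so $h(t) = t^p$ does not literally satisfy "$h(\sqrt{\cdot})$ convex" unless $p \geq 2$. The honest fix is to observe that for $0 < p < \infty$ the Schatten-$p$ characterization of Hankel operators from $F_\varphi^2$ to $L_\varphi^2$ in terms of $\|G_{2,r}(f)\|_{L^p}$ is precisely \cite[Theorem 1.2 / Theorem 3.8]{Schatten,IDA} (the paper already invokes "Theorem 3.8 in \cite{IDA}" in the proof above), so for the full range $1<p<\infty$ the corollary is a direct restatement of that known result, while \autoref{thm:Hf=Sh} supplies the genuinely new $S_h$-version for convex $h(\sqrt{\cdot})$ and recovers the $p \geq 2$ case as a special instance. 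I would phrase the proof of the corollary as: "For $p \geq 2$ apply \autoref{thm:Hf=Sh} with $h(t) = t^p$; for the remaining range $1 < p < 2$, the equivalence and norm estimate follow from the proof of \autoref{thm:Hf=Sh} verbatim, since the only place convexity of $h(\sqrt{\cdot})$ is used — Jensen's inequality in the $(2)\Rightarrow(1)$ direction and the subadditivity bookkeeping — can be replaced by the elementary inequalities $\|a+b\|_{L^p}^{\min(1,p)} \leq \|a\|_{L^p}^{\min(1,p)} + \|b\|_{L^p}^{\min(1,p)}$ and the corresponding $S_p$ quasi-triangle inequality," and then deduce the norm equivalence from the constant-tracking above.
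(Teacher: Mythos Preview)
The paper gives no proof for this corollary at all; it is simply stated immediately after the proof of \autoref{thm:Hf=Sh}, so the implicit intent is exactly your specialization $h(t)=t^p$. Your proposal therefore matches the paper's (non-)approach, and your tracking of constants to get the two-sided norm estimate $\|H_f\|_{S_p}\simeq\|f\|_{\mathrm{IDA}^p}$ is the right way to extract the ``furthermore'' clause.

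You are also right to flag the convexity issue: the hypothesis of \autoref{thm:Hf=Sh} requires $h(\sqrt{\cdot})$ convex, and $t\mapsto t^{p/2}$ is convex only for $p\ge 2$, so the corollary as stated for the full range $1<p<\infty$ does \emph{not} follow from \autoref{thm:Hf=Sh} alone. This is a genuine gap in the paper's presentation that you have correctly diagnosed. Your proposed remedy---covering $1<p<2$ by invoking the already-known Schatten-$p$ characterization from \cite{Schatten,IDA} (which the paper itself cites in the introduction as established for all $0<p<\infty$)---is the clean fix; your alternative of rerunning the proof with quasi-norm inequalities in place of Jensen is also viable but less economical. In short: your argument is sound and more careful than the paper's, which silently overstates the range to which \autoref{thm:Hf=Sh} directly applies.
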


\renewcommand{\refname}{References}

\end{document}